\newtheorem{theorem}{Th\'eor\`eme}
\newtheorem{proposition}{Proposition}
\newtheorem{lemma}[proposition]{Lemme}
\newtheorem{corollary}[proposition]{Corollaire}
\newtheorem{definition}[proposition]{D\'efinition}
\newtheorem{remark}[proposition]{Remarque}
\numberwithin{proposition}{section}
\numberwithin{equation}{section}
\newcommand\step[2]{\medbreak\noindent{\bf Etape #1. }{\it #2}\medbreak}
\newcommand{\occult}[1]{}
\newcommand{\new}[1]{{\bf #1}}
\newcommand{\NEW}[1]{{\bf #1}}
\newcommand\eps{\epsilon}
\newcommand\erg{{\operatorname{erg}}}
\newcommand\mult{{\operatorname{mult}}}
\newcommand\Prob{{\operatorname{Prob}}}
\newcommand\Q{{{\mathcal Q}}}
\renewcommand\top{{\operatorname{top}}}
\newcommand\ZZ{{\mathbb Z}}
\begin{document}

\title[Mesures d'entropie maximale de diff\'eomorphismes Anosov]{Une Nouvelle Analyse \\ des Mesures Maximisant l'Entropie \\des Diff\'eomorphismes d'Anosov de Surfaces}

\begin{abstract}
Cette note illustre la strat\'egie de \cite{BuzziPWAH} en obtenant une nouvelle preuve de la multiplicit\'e finie de
la mesure maximisant l'entropie des diff\'eomorphismes d'Anosov, ici dans le cas bi-dimensionel. Cette approche
\'evite en particulier la construction explicite de partitions de Markov.

\medbreak

\noindent {\it Abstract.---} This note illustrates the strategy of \cite{BuzziPWAH} by giving a new proof of the
finite multiplicity of the maximum entropy measure of Anosov diffeomorphisms (here on surfaces). This
approach avoids the explicit construction of Markov partitions.
\end{abstract}

\author{J\'er\^ome Buzzi}

\address{C.N.R.S. \& Universit\'e Paris-Sud\\France}

\email{jerome.buzzi@math.u-psud.fr}

\urladdr{www.jeromebuzzi.com}

\keywords{Syst\`emes dynamiques; th\'eorie ergodique; mesures maximisant l'entropie; hyperbolicit\'e; dynamique symbolique; tours.}

\maketitle

\tableofcontents

\section{Introduction}

Nous donnons dans cette note une nouvelle preuve du r\'esultat classique suivant (nous renvoyons \`a \cite{BowenBook} pour
la preuve usuelle passant par la construction explicite de partitions de Markov finies, \`a \cite{BowenSpecif} pour une
approche bas\'ee sur la propri\'et\'e de sp\'ecification et \`a \cite{GL} pour une approche r\'ecente bas\'ee sur l'op\'erateur de transfert):

\begin{theorem}[Sinai]
Soit $f:M\to M$ un diff\'eomorphisme $C^1$ sur une surface compacte $M$ satisfaisant la condition d'Anosov: il existe une d\'ecomposition continue du fibr\'e tangent: $TM=E^s\oplus E^u$ telle que (i) $f'(x).E^{u/s}_x=E^{u/s}_{fx}$; (ii) $\|f'(x)|E^s_x\|\leq\kappa$ et $\|f'(x)^{-1}|E^u_x\|\leq\kappa$ pour une constante $\kappa<1$ et tout $x\in M$.

Alors il existe un nombre fini de \NEW{mesures maximales}, i.e., de mesures de probabilit\'e $\mu$ sur $M$, invariantes et ergodiques par $f$, dont l'entropie $h(f,\mu)$ co\"\i ncide avec l'entropie topologique $h_\top(f)$. Si, de plus,
$f$ est topologiquement transitive, i.e., s'il existe une orbite dense, alors  la mesure d'entropie
maximale est unique.
\end{theorem}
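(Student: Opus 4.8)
The plan is to follow the strategy of \cite{BuzziPWAH}: replace the explicit finite Markov partition of the classical argument by a countable symbolic model, and reduce the whole question to a counting problem on a countable Markov shift. First I would build a countable topological Markov shift $(\Sigma,\sigma)$ together with a measurable, essentially finite-to-one factor map $\pi\colon\Sigma\to M$ with $f\circ\pi=\pi\circ\sigma$. Since $f$ is Anosov the splitting $TM=E^s\oplus E^u$ is global and \emph{every} invariant probability measure is hyperbolic, so the local stable and unstable manifolds have uniform size and one need not discard any Pesin cut-off: this is where the surface Anosov case is genuinely simpler than the general setting. Using a countable alphabet of admissible rectangles (graphs of uniformly Lipschitz sections of $E^{u/s}$) glued according to a Markov compatibility rule, and organising them into the towers (``tours'') of \cite{BuzziPWAH}, I would arrange that $\pi$ is defined on a shift-invariant set carrying the lift of every invariant measure and that the multiplicity $\#\pi^{-1}(x)$ is bounded by a constant $\mult<\infty$ on a set of full measure for each such measure.

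\emph{Transporting entropy.} Because $\pi$ is finite-to-one with bounded multiplicity, it preserves entropy: for every ergodic $\hmu$ on $\Sigma$ one has $h(f,\pi_*\hmu)=h(\sigma,\hmu)$, and conversely every ergodic $\mu$ on $M$ of positive entropy lifts to some ergodic $\hmu$ with $h(\sigma,\hmu)=h(f,\mu)$. As $h_\top(f)>0$ for a surface Anosov diffeomorphism, this identifies the ergodic maximal measures of $f$ with the ergodic measures of maximal Gurevich entropy of $\Sigma$, up to the finite multiplicity of $\pi$; the problem becomes that of counting the latter.

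\emph{Classification on the shift.} I would then decompose $\Sigma$ into irreducible components and invoke the Gurevich--Vere-Jones--Sarig theory of countable Markov shifts: a transitive component carries an ergodic measure of maximal Gurevich entropy if and only if it is positive recurrent at the top entropy, and that measure is then unique, being the Markov measure built from the positive eigenvectors of the transition operator at its spectral radius. Each relevant component therefore contributes exactly one ergodic maximal measure.

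\emph{Finiteness and uniqueness.} The hard part will be to bound the number of positive-recurrent components of Gurevich entropy $h_\top(f)$ and to push this bound down to $M$. Here the bounded multiplicity of $\pi$ is decisive: distinct ergodic maximal measures of $f$ are mutually singular, each admits at most $\mult$ ergodic lifts, and no two of them can be coded by the same component, so a counting argument must convert the geometric finiteness encoded in the tower into a purely combinatorial bound on the number of top-entropy components. I expect the technical heart of the argument to lie exactly in this conversion. Finally, when $f$ is topologically transitive the nonwandering set is a single homoclinic class, which forces a single positive-recurrent top-entropy component and hence a unique maximal measure; this yields the last assertion of the theorem.
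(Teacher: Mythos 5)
Your overall plan --- code $f$ by a countable-state Markov shift and appeal to Gurevich theory --- is indeed the strategy of the paper, but your proposal leaves as black boxes the two steps that constitute the actual proof. First, the coding itself: you posit ``admissible rectangles glued according to a Markov compatibility rule,'' but producing such a Markov family is essentially a Markov partition, whose explicit construction this approach is meant to bypass. The paper instead starts from an arbitrary finite partition $\Q$ into carr\'es $us$, which is \emph{not} Markov: the symbolic manifolds $W^{s}_\Q(x)$, $W^{u}_\Q(x)$ can be cut arbitrarily short by the orbit of the partition boundary, even though the true invariant manifolds have uniform size in the Anosov setting. So your remark that hyperbolicity of every invariant measure spares you any Pesin cut-off misidentifies the difficulty: the work is (i) an entropy argument (Proposition \ref{prop-Wu-size}) showing, semi-uniformly over all measures of large entropy, that $d(x,\partial W^{u}_\Q(x))>r_1$ outside a set of small measure; (ii) a periodic-extension trick ($M_T=M\times\ZZ/T\ZZ$ with two partition scales, Proposition \ref{prop-ext-Wsu-cross}) forcing $W^{s}_\Q$ and $W^{u}_\Q$ to actually traverse the squares on a set of positive measure for every such measure; and (iii) a first-return-word construction giving a Markov structure on those itineraries only. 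None of this is supplied, or even sketched, in your proposal.

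Second, and more seriously, the finiteness count. You correctly note that Gurevich theory yields at most one maximal measure per irreducible component, and you then write that the ``technical heart'' is to bound the number of top-entropy components --- but you give no mechanism for that bound, only the expectation that one exists. Bounded multiplicity of $\pi$ cannot do it by itself: a countable Markov shift of finite Gurevich entropy may have infinitely many irreducible components all of maximal entropy (infinitely many disjoint copies of a single finite graph), and each could sit over a different part of $M$. The paper's mechanism is concrete and combinatorial: in the graph $\mathcal D$ built from first-return words, every irreducible component contains a starred state, and the starred states come from the finite alphabet $\Q$, hence are finite in number. That observation is exactly the missing idea. Likewise, your uniqueness step (``transitivity forces a single positive-recurrent component'') is an assertion, not an argument; the paper derives irreducibility of $\mathcal D$ from transitivity by a genuine geometric argument --- density of the orbit of any unstable arc, a choice of partition whose stable boundaries avoid the preorbits of periodic points, and unstable expansion producing an admissible concatenation of first-return words joining any two squares.
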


Le but ici est de d\'ecrire une approche plus flexible, \'evitant notamment
la construction explicite de partitions de Markov et pouvant s'\'etendre \`a
des cas non-uniformes. Il s'agit d'une variante simplifi\'ee de l'approche de \cite{BuzziPWAH} qu'on esp\`ere pouvoir adapter aux diff\'eomorphismes entropie-hyperboliques \cite{BuzziICMP2006}. Cette adaptation,
au moins sous une hypoth\`ese de domination et une condition probablement technique d'int\'egrabilit\'e
lisse, sera pr\'esent\'ee dans un article en pr\'eparation.

\subsection{Contexte}

Tout le long de cette note, $f:M\to M$ sera comme ci-dessus. Nous utiliserons les faits standard suivants:
 \begin{enumerate}[(A)]
  \item il existe des feuilletages r\'eguliers $\mathcal F^u,\mathcal F^s$ de $M$ qui sont $f$-invariants et tangents en chaque point \`a $E^u_x, E^s_x$. La r\'egularit\'e des feuilletages est du type suivant. Au voisinage de chaque point $x$ de $M$, il existe un hom\'eomorphisme $\phi^s:]-1,1[^2\to U_x$ tel que: (i) $U_x$ est un voisinage de $x$; (ii) $\phi^s(]-1,1|\times\{y\})$ est une composante connexe de $\mathcal F^s(\phi^s(0,x)\cap U_x$; (iii) $\phi^s|]-1,1|\times\{y\}$ est un $C^1$-diffeomorphisme sur son image.
  \item il existe un nombre $r_0>0$ tel que pour tout $x\in M$:
   $$
      \forall y\in M\quad [\forall n\in\ZZ\; d(f^ny,f^nx)<r_0] \iff y=x.
   $$
  Un tel $r_0>0$ est appel\'e \new{constante d'expansivit\'e de $f$}.
  \item $h_\top(f)>0$.
  \item Il existe $\eps_0>0$, $C_0<\infty$ et $\kappa_0<1$ tels que pour tous $x,y\in M$: pour $d(x,y)<\eps_0$, 
  si $\mathcal F^s(x)=\mathcal F^s(y)$  alors $d(f^kx,f^ky)\leq C_0\kappa_0^k$; si $\mathcal F^u(x)=\mathcal F^u(y)$
  alors $d(f^{-k}x,f^{-k}y)\leq C_0\kappa_0^k$.
 \end{enumerate}

Ces faits sont bien connus. On pourra se reporter \`a \cite{KatokHasselblatt}.

 \subsection{Quelques d\'efinitions}
 
Les d\'efinitions et convention suivantes seront commodes:
 \begin{itemize}
  \item une mesure est sauf mention contraire une mesure de probabilit\'e;
  \item une \new{mesure de grande entropie} est une mesure invariante et ergodique telle que $h(f,\mu)>h_0$ o\`u $h_0<h_\top(f)$ est un param\`etre implicite. Nous d\'enotons par $\Prob_\erg^{h_0}(f)$ l'ensemble de toutes les mesures invariantes et ergodiques d'entropie $>h_0$;
  \item un sous-ensemble $E\subset M$ est dit \new{$h$-n\'egligeable} s'il est de mesure z\'ero pour toutes les mesures de grande entropie: $\exists h_0<h_\top(f)\forall \mu\in\Prob_\erg^{h_0}(f)$, $\mu(E)=0$; 
  \item une propri\'et\'e \new{a lieu $h$-presque partout} si elle a lieu pour tout point de $M$ en dehors d'un ensemble $h$-n\'egligeable;
  \item une estimation a lieu de fa\c{c}on \new{semi-uniforme} si elle a lieu sur
  un ensemble mesurable de mesure uniform\'ement minor\'ee par rapport \`a toute
  mesure de grande entropie.
 \end{itemize}
Etant donn\'e un ensemble $\Q$ de parties disjointes de $M$, on d\'efinit pour $x\in M$ o\`u cela a un sens:
  \begin{itemize}
  \item $\Q(x)$, l'\'el\'ement de $\Q$ contenant $x$;
  \item le \new{$\Q$-itin\'eraire} de $x$ est $A\in\Q^\ZZ$ tel que $A_n=\Q(f^nx)$ pour tout $n\in\ZZ$.
  \item la \new{vari\'et\'e symbolique $\Q$-stable} et la \new{vari\'et\'e symbolique $\Q$-instable} sont, respectivement:
    $$
       W^s_\Q(x):=\bigcap_{n\geq1} \overline{f^{-n}\Q(f^nx)}
       \text{ et }
       W^u_\Q(x):=\bigcap_{n\geq1} \overline{f^{n}\Q(f^{-n}x)}
        \qquad (x\in M)
    $$
  et
    $$
       W^s(A):=\bigcap_{n\geq1} \overline{f^{-n}A_n}
       \text{ et }
       W^u(A):=\bigcap_{n\geq1} \overline{f^{n}A_{-n}}
         \qquad (A\in\Q^\ZZ).
    $$
  \item $W^s_\Q(x)$ \new{traverse} $\Q$ (ou simplement, traverse) si
 $W^s_\Q(x)$ contient un arc joignant les deux segments oppos\'es du bord
 instable de $\Q(x)$. Cette notion s'\'etend de fa\c{c}on naturelle \`a
 $W^s(A)$, $W^u_\Q(x)$, $W^u(A)$.
 \end{itemize}
 
\subsection{Strat\'egie de la preuve}
La preuve se d\'ecompose en les \'etapes suivantes:
 \begin{itemize}
  \item Construction d'une partition finie $\Q$ de diam\`etre maximal arbitrairement petit en carr\'es (voir d\'efinition ci-dessous);
  \item Existence, pour tout $m<1$, de $r_1>0$ tel que l'ensemble des $x\in M$ v\'erifiant
  \begin{equation}\label{eq-long-Wsu}
    d(x,\partial W^s_\Q(x))>r_1 \text{ et } d(x,\partial W^u_\Q(x))>r_1
  \end{equation}
  est de mesure au moins $m$ pour toute mesure de grande entropie.
  \item Remplacement de $f$ par une extension p\'eriodique et modifications de $\Q$ garantissant que  $W^s_\Q(x)$ et $W^u_\Q(x)$ traversent pour tout $x$
appartenant \`a un ensemble de mesure strictement positive pour toute mesure de grande entropie. 
  \item Structure markovienne de l'ensemble des $\Q$-itin\'eraires de $h$-presque tout point.
  \item Th\'eorie de Gurevi\v{c} des sous-d\'ecalages markoviens (\`a ensemble d'\'etats d\'enombrable) ramenant le 
d\'enombrement des mesures maximales \`a celui des parties irr\'eductibles d'entropie maximale et contr\^ole de celles-ci.
 \end{itemize}

\section{Une partition $\Q$ en carr\'es et ses vari\'et\'es symboliques}

On construit dans cette section une partition $\Q$ et on obtient une borne inf\'erieure
semi-uniforme pour les vari\'et\'es $\Q$-stables et $\Q$-instables. Les feuilletages invariants permettent
de ne consid\'erer que des g\'eom\'etries triviales:

\begin{definition}
Un \new{carr\'e} (ou carr\'e $us$) est un disque ouvert topologique dont le bord est constitu\'e de quatre segments
de courbes compacts alternativement inclus dans une feuille stable et une feuille instable (voir Figure \ref{fig-carre-us}).
On parlera des deux bords stables et des deux bords instables.
\end{definition}

\begin{figure}
\centering
\includegraphics[width=4cm]{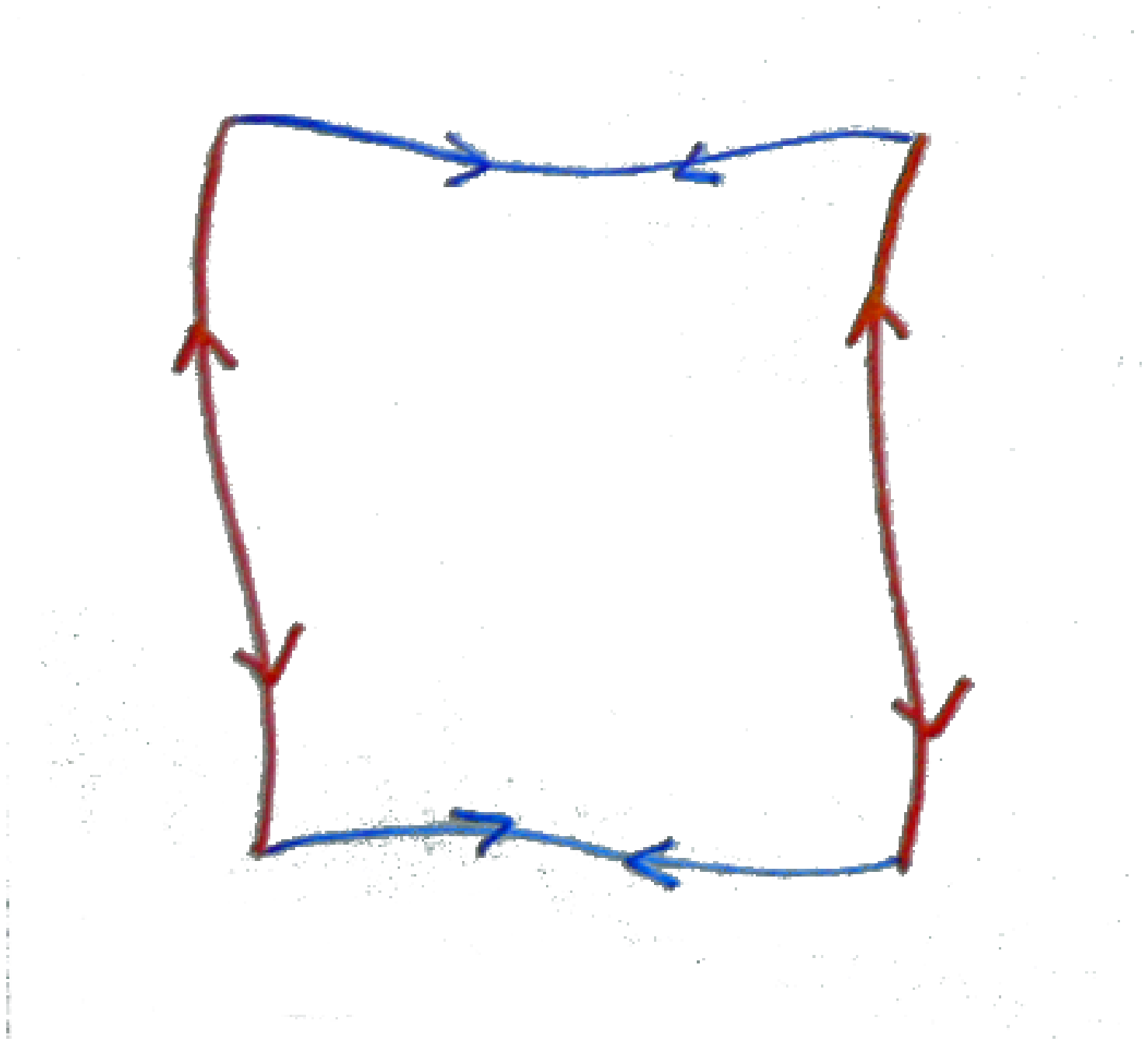}
\caption{Un carr\'e born\'e par quatre courbes alternativement incluses dans $\mathcal F^s$ et $\mathcal F^u$.}\label{fig-carre-us}
\end{figure}

Il est imm\'ediat que l'image ou l'intersection de tels carr\'es sont encore des carr\'es du m\^eme type.
Le lemme suivant est clair:

\begin{lemma}\label{lem-find-Q-r} 
Soit $r>0$. Il existe une partition finie en carr\'es dont les diam\`etres sont major\'es par $r$.
\end{lemma}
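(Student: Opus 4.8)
The plan is to produce the partition in two stages: first cover $M$ by finitely many small squares coming from the local product structure of the two foliations, then refine this cover into a genuine partition, using the stated fact that an intersection of squares is again a square.

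For the first stage I would invoke the regularity recorded in~(A). The transversality of $\mathcal F^s$ and $\mathcal F^u$, together with the product charts $\phi^s$ (and the analogous $\phi^u$), provides around each $x\in M$ a chart in which the stable leaves are horizontal segments and the unstable leaves are vertical segments. The image of a small closed sub-rectangle $[-a,a]^2$ under such a chart is then a square in the sense of the definition: its boundary consists of two stable arcs (the horizontal sides) and two unstable arcs (the vertical sides). Shrinking $a$, these squares can be taken of diameter $<r$, and by compactness of $M$ finitely many of their interiors, say $R_1,\dots,R_N$, cover $M$.

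For the second stage I would cut the $R_i$ against one another. Each $R_k$ has two stable and two unstable boundary arcs; tracing these on a fixed $R_i$ and extending each trace, within the product chart $R_i$, to a full segment from one side to the opposite side (possible because any stable, resp. unstable, leaf that meets the product chart $R_i$ traverses it), one obtains finitely many horizontal and vertical traversals of $R_i$ — finiteness holding because each boundary arc is compact and so crosses the small chart $R_i$ only finitely often. These traversals cut $R_i$ into a finite grid whose cells are squares contained in $R_i$. Defining, for $x\in M$, its atom to be the intersection over all $i$ with $x\in R_i$ of the grid cell of $R_i$ containing $x$, and using that an intersection of squares is a square, one obtains finitely many atoms, each an open square contained in some $R_i$ and hence of diameter $<r$; their closures cover $M$ and their interiors are pairwise disjoint, the exceptional set being the finite union of cutting arcs.

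The only point requiring care — and the step I expect to be the main, if modest, obstacle — is verifying that each grid cell is a genuine four-sided square rather than a region with a more complicated boundary. This is exactly where the product structure is used: since any stable or unstable leaf meeting the product chart $R_i$ traverses it, each cut runs cleanly from one side of $R_i$ to the opposite side, so the pieces it creates retain the alternating stable/unstable boundary structure. Finiteness of the number of cuts, and hence of the resulting partition, follows from finiteness of the cover $R_1,\dots,R_N$ and compactness, while the measure-zero set formed by the cutting arcs is harmless for the sequel.
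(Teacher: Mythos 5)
Your proof is correct and supplies exactly the argument the paper leaves implicit: the paper states this lemma without proof (``Le lemme suivant est clair''), having just recorded the one fact your refinement step relies on, namely that intersections of squares are again squares, and your two stages (small product-structure squares covering $M$ by compactness, then cutting along the finitely many extended boundary traces) are the natural elaboration of that intended argument. Your construction also matches the paper's convention (footnote in the symbolic-dynamics section) that a ``partition'' here means a finite collection of disjoint open squares whose union is dense, the finite union of cutting arcs being the harmless exceptional set.
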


L'estimation semi-uniforme annonc\'ee est la suivante:

\begin{proposition}\label{prop-Wu-size}
Pour tout $m_1>0$, il existe $h_1<h_\top(f)$ et $r_1>0$ tel que pour tout $\mu\in\Prob_\erg^{h_0}(f)$,
 $$
   \mu\left(\{x\in M:d(x,\partial W^u_\Q(x))<r_1\}\right) < m_1.
 $$
\end{proposition}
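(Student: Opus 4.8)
The plan is to describe $W^u_\Q(x)$ geometrically, reduce the estimate to controlling the mass that thin unstable neighbourhoods of the stable boundary of $\Q$ receive, and then exploit the large entropy of $\mu$ to bound that mass. Write $\partial^s\Q$ for the union of the (finitely many) stable boundaries of the squares of $\Q$, let $d^u$ denote arc length along unstable leaves, and let $N^u_\delta(\Gamma)$ be the set of points within unstable arc-distance $\delta$ of a set $\Gamma$. First I would check that $W^u_\Q(x)$ is an arc of the leaf $\mathcal F^u(x)$ through $x$: if $y\in W^u_\Q(x)$ then $f^{-n}y$ and $f^{-n}x$ lie in the same closed square for every $n\geq1$, hence stay within $\diam\Q$ of one another in backward time, so for $\diam\Q$ below the expansivity constant $r_0$ the point $y$ must lie on $\mathcal F^u(x)$. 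The endpoints of this arc are created when the backward orbit meets a stable boundary.

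Concretely, let $p$ be the endpoint of $W^u_\Q(x)$ nearest $x$ and let $n_0\geq1$ be the first time with $f^{-n_0}p\in\partial^s\Q(f^{-n_0}x)$. Since $\|f'(x)^{-1}|E^u_x\|\leq\kappa<1$, the map $f^{-1}$ contracts unstable arc length by a factor at most $\kappa$, so $d^u(f^{-n_0}x,\partial^s\Q(f^{-n_0}x))\leq d^u(f^{-n_0}x,f^{-n_0}p)\leq\kappa^{n_0}\,d^u(x,p)$. Comparing arc length to the ambient metric at small scale by a fixed constant $C$ and setting $B_n:=\{z:d^u(z,\partial^s\Q(z))<C\kappa^n r_1\}$, one gets the inclusion $\{x:d(x,\partial W^u_\Q(x))<r_1\}\subseteq\bigcup_{n\geq1}f^nB_n$. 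By $f$-invariance this gives
\[
  \mu\bigl(\{x:d(x,\partial W^u_\Q(x))<r_1\}\bigr)\leq\sum_{n\geq1}\mu(B_n),
\]
so it suffices to bound $\mu\bigl(N^u_\delta(\partial^s\Q)\bigr)$ uniformly over $\mu\in\Prob_\erg^{h_1}(f)$ by a quantity tending to $0$ fast enough as $\delta\to0$.

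The core of the argument, and the step I expect to be the main obstacle, is exactly this uniform estimate. The intuition is that $N^u_\delta(\partial^s\Q)$ is thin in the expanding direction, whereas a measure of entropy near $h_\top(f)>0$ must distribute its mass over exponentially many distinct unstable behaviours, so it cannot concentrate against a fixed stable curve. I would make this quantitative by an entropy–defect argument at a small fixed scale $\eps<r_0$: if $\mu\bigl(N^u_\delta(\partial^s\Q)\bigr)\geq a$, then the set $Y_N$ of points spending a fraction at least $a/2$ of the times $0,\dots,N-1$ in $N^u_\delta(\partial^s\Q)$ has measure $\geq a/2$ for large $N$, hence carries at least $e^{N(h(\mu)-\eps)}$ points that are $(N,\eps)$-separated (Katok). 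On the other hand $Y_N$ is covered by prescribing an $\eps$-box at each time; at the $\geq aN/2$ constrained times the number of admissible boxes is smaller than the total by a definite factor $\theta(\delta,\eps)<1$, because $N^u_\delta(\partial^s\Q)$, a neighbourhood of finitely many stable curves, meets fewer $\eps$-boxes than all of $M$ does. Comparing the two counts yields an entropy defect of the form $h(\mu)\leq h_\top(f)+2\eps+\tfrac a2\log\theta(\delta,\eps)$, which forces $\mu\bigl(N^u_\delta(\partial^s\Q)\bigr)$ to be small once $h_1$ is close enough to $h_\top(f)$, and to tend to $0$ as $\delta\to0$ since $\theta(\delta,\eps)\to0$. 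The delicate point is the \emph{rate}: the naive defect only gives a bound of order $1/\log(1/\delta)$, which is not summable in the reduction. To conclude I would therefore either sharpen the covering–reduction factor $\theta(\delta,\eps)$ (exploiting that the uniform unstable expansion pushes points off $\partial^s\Q$ by a factor $\geq\kappa^{-1}$ at each step, so that visits to $N^u_\delta$ are genuinely transient and their itineraries lose a definite exponential factor), which should upgrade the estimate to a power bound $\mu(B_n)\leq C'\kappa^{cn}$, or else run a single entropy–defect argument directly on $\bigcup_n f^nB_n$ rather than summing scale by scale.

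Granting such a summable bound $\mu(B_n)\leq C'\kappa^{cn}$, the reduction closes: choosing $h_1$ close enough to $h_\top(f)$ and $r_1$ small enough makes $\sum_{n\geq1}\mu(B_n)<m_1$, which is the assertion. The corresponding bound for $W^s_\Q$ follows by applying the same reasoning to $f^{-1}$, whose unstable foliation is $\mathcal F^s$.
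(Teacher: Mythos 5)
Your geometric reduction is sound as far as it goes: $W^u_\Q(x)$ is indeed an arc of $\mathcal F^u(x)$ whose endpoints arise from backward encounters with the stable boundary $\partial^s\Q$, and the contraction of $f^{-1}$ along unstable leaves gives the inclusion $\{x:d(x,\partial W^u_\Q(x))<r_1\}\subseteq\bigcup_{n\geq1}f^nB_n$. But the whole proof then rests on an estimate you never establish, namely a bound on $\mu(B_n)$ that is uniform over $\mu\in\Prob_\erg^{h_1}(f)$ and summable in $n$, and the form you hope for, $\mu(B_n)\leq C'\kappa^{cn}$, is false. Take a transitive Anosov diffeomorphism (e.g.\ a hyperbolic toral automorphism, for which the Proposition must also hold). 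For any fixed $h_1<h_\top(f)$ and any large $n$, specification produces an \emph{ergodic} measure of entropy $>h_1$ spending a fraction $t\approx\tfrac12\bigl(1-h_1/h_\top(f)\bigr)$ of its time shadowing a periodic orbit of period $O(n)$ that passes within $\kappa^{n}$ of $\partial^s\Q$; such a measure gives $\mu(B_n)\geq c\,t/n$. Thus the uniform modulus $\phi(\delta):=\sup\{\mu(N^u_\delta(\partial^s\Q)):h(f,\mu)>h_1\}$ is of order $1/\log(1/\delta)$ and no better, for every choice of $h_1$ and $r_1$; consequently $\sum_n\phi(C\kappa^nr_1)$ diverges (harmonic series) and your stated plan --- "bound $\mu(N^u_\delta)$ uniformly by a quantity tending to $0$ fast enough, then sum over scales" --- cannot be carried out at all. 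You correctly diagnosed the $1/\log(1/\delta)$ rate as the obstacle, but of your two escape routes the first (a power bound) is unavailable in principle, and the second ("a single entropy-defect argument on $\bigcup_nf^nB_n$") is precisely the missing proof, not a variant of your box-counting.

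The structural defect is that summing over scales charges a single deep approach to $\partial^s\Q$ once per scale (a point at depth $\kappa^n$ lies in $B_m$ for all $m\leq n$), whereas its entropy cost must be assessed once, conditionally on the past. That is what the paper's proof does: by Lemma \ref{lem-h-by-W}, $h(f,\mu)$ is the conditional entropy of $\Q^T$ given the past $\sigma$-algebra $\bigvee_{n\geq1}f^n\Q$, i.e.\ given $W^u_\Q(x)$; and if $d(x,\partial W^u_\Q(x))<r_1$, then once $W^u_\Q(x)$ is known the cell $\Q^T(x)$ is determined up to $2\times 4=8$ choices ($2$ for the nearby endpoint, $4$ from the multiplicity bound (\ref{eq-mult-4}) applied to a ball $\overline{B(z,r_1)}$). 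Hence the set $M_1=\{d(x,\partial W^u_\Q(x))\leq r_1\}$ contributes at most $\tfrac1T\log 8<\alpha$ per unit mass, while its complement contributes at most $h_\top(f)+\alpha$, giving $h(f,\mu)\leq h_\top(f)-\mu(M_1)h_\top(f)+\alpha$ in one stroke --- a bound linear in $\mu(M_1)$ with no summation over scales, which is exactly consistent with the $1/\log(1/\delta)$ phenomenon above. Your ambient $(N,\eps)$-box counting cannot produce this, because a defect measured at a fixed spatial scale is blind to how long the itinerary stays pinned after a deep approach; the conditioning on the unstable leaf (equivalently, on the past itinerary) is the key mechanism of the paper's argument, and it is absent from your proposal.
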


\begin{remark}
Si une mesure ergodique est d'entropie non-nulle, la preuve ci-dessous impliquement seulement qu'il existe un ensemble de mesure positive o\`u $d(x,\partial W^s_\Q(x))\geq\text{const}$ et un autre, \'egalement de mesure positive, o\`u
$d(x,\partial W^u_\Q(x))\geq\text{const}$, mais n'entra\^\i ne pas que ces deux ensembles
s'intersectent. Ceci est un obstacle 
\`a l'analyse des mesures d'\'equilibre pour des potentiels non presque constants.
\end{remark}

Avant de prouver cet \'enonc\'e, donnons quelques d\'efinitions et faits auxiliaires. Les \new{partitions it\'er\'es} sont, pour $n\geq1$, 
 $$
  \Q^n:=\{[A_0A_1\dots A_{n-1}]:=A_0\cap f^{-1}A_1\cap\dots\cap f^{-n+1}A_{n-1}\ne\emptyset:A_0,\dots,A_{n-1}\in\Q\}. 
 $$
La \new{multiplicit\'e locale} d'une collection $C$ de parties de $M$ est:
 $$
    \mult(C) = \max_{x\in M} \#\{A\in C:\bar A\ni x\}.
 $$
Pour tout $n\geq1$, $\Q^n$ est une partition en carr\'es du fait de l'invariance des feuilletages. Il s'en suit que:
 \begin{equation}\label{eq-mult-4}
   \forall n\geq0\; \mult(\Q^n) \leq 4.
 \end{equation}

La formule suivante est classique (voir, par exemple, \cite{Petersen}):

\begin{lemma}\label{lem-h-by-W}
Soit $\mu$ une mesure $f$-invariante. Consid\'erons la partition mesurable $\{W^u_\Q(x):x\in M\}=\bigvee_{n\geq1} f^n\Q$ et la d\'esint\'egration
associ\'ee $\{\mu|W^u_\Q(x)\}_{x\in M}$ of $\mu$. Alors, pour tout $N\geq1$,
 \begin{align*}
   h(f,\mu,\Q) &= \frac1N H_\mu(\Q^N|\bigvee_{n\geq1} f^n\Q) = \frac1N \int_M H_{\mu|W^u_\Q(x)}(\Q^N) \, d\mu \\
    &\leq \int_M \frac1N\log\#\{A\in\Q^N:(\mu|W^u(\Q(x)))([A])>0\} \, \mu(dx)
 \end{align*}
\end{lemma}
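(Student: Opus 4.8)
The plan is to read all three relations as standard entropy calculus in the sense of \cite{Petersen}, the only genuinely geometric input being the identification of the measurable partition $\bigvee_{n\ge1}f^n\Q$ with the family of unstable symbolic manifolds. First I would prove the leftmost equality, in fact showing that $\frac1N H_\mu(\Q^N\mid\bigvee_{n\ge1}f^n\Q)$ does not depend on $N$. By Kolmogorov--Sinai, the chain rule, and the $f$-invariance of $\mu$, the convention $\Q^N=\bigvee_{k=0}^{N-1}f^{-k}\Q$ gives (applying $f^k$ to the $k$-th summand) $h(f,\mu,\Q)=\lim_k H_\mu(\Q\mid\bigvee_{i=1}^k f^i\Q)=H_\mu(\Q\mid\bigvee_{i\ge1}f^i\Q)$, the uncertainty of the present letter given the whole symbolic past. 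For the conditioned block I expand by the chain rule
\[
H_\mu\Big(\Q^N\,\Big|\,\bigvee_{n\ge1}f^n\Q\Big)=\sum_{k=0}^{N-1}H_\mu\Big(f^{-k}\Q\,\Big|\,\bigvee_{j=0}^{k-1}f^{-j}\Q\vee\bigvee_{n\ge1}f^n\Q\Big),
\]
and, applying $f^k$ together with invariance, each summand equals $H_\mu(\Q\mid\bigvee_{m\ge1}f^m\Q)$, because the conditioning $\sigma$-algebra becomes the full past $\bigvee_{m\ge1}f^m\Q$ for every $k$. Hence the sum is $N\,H_\mu(\Q\mid\bigvee_{m\ge1}f^m\Q)$, and the prefactor $1/N$ returns exactly $h(f,\mu,\Q)$.

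Next I would obtain the middle equality from Rokhlin's disintegration of conditional entropy. This is where the geometric input enters: the measurable partition $\bigvee_{n\ge1}f^n\Q$ is, up to boundary points, the partition into unstable symbolic manifolds, since its atom through $x$ is $\{y:\Q(f^{-n}y)=\Q(f^{-n}x)\ \forall n\ge1\}$, which by the backward contraction of unstable leaves (fact (D)) is realized geometrically by $W^u_\Q(x)$. Granting this identification, $\{\mu|W^u_\Q(x)\}_{x\in M}$ is precisely the disintegration of $\mu$ along this partition, and the general identity $H_\mu(\mathcal P\mid\mathcal A)=\int_M H_{\mu_x}(\mathcal P)\,d\mu(x)$, applied with $\mathcal P=\Q^N$ and $\mu_x=\mu|W^u_\Q(x)$, is exactly the asserted formula.

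The last line is then the elementary bound $H_\nu(\mathcal P)\le\log\#\{P\in\mathcal P:\nu(P)>0\}$, valid for any probability $\nu$ and any partition $\mathcal P$ because the entropy is maximized by the equidistribution on the atoms of positive mass. Applying it with $\nu=\mu|W^u_\Q(x)$ and $\mathcal P=\Q^N$, then integrating in $x$ and dividing by $N$, yields the stated inequality.

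The first and third steps are routine; the delicate point is the identification used in the middle step. Since $\mu$ is allowed to be an arbitrary invariant measure, the boundaries of the carr\'es need not be $\mu$-null, so I would argue at the level of $\sigma$-algebras: that $\bigvee_{n\ge1}f^n\Q$ and the family $\{W^u_\Q(x)\}$ generate the same sub-$\sigma$-algebra modulo $\mu$, and that the closures appearing in the definition of $W^u_\Q$ (which only append boundary arcs of the squares) leave the Rokhlin disintegration, and hence the conditional entropies, unchanged. This measurable-partition bookkeeping is the only substantive obstacle; everything else is the standard entropy calculus of \cite{Petersen}.
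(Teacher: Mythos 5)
The paper offers no proof of Lemma \ref{lem-h-by-W} at all: it is presented as classical, with a pointer to \cite{Petersen}. Your argument therefore supplies the omitted proof, and it is the correct standard one. Step 1 (telescoping $H_\mu(\Q^N\mid\bigvee_{n\ge1}f^n\Q)$ by the chain rule and reducing each summand, via $f^k$-invariance, to $H_\mu(\Q\mid\bigvee_{m\ge1}f^m\Q)$, which equals $h(f,\mu,\Q)=h(f^{-1},\mu,\Q)$) is exactly right, as is the use of Rokhlin's disintegration identity $H_\mu(\mathcal P\mid\mathcal A)=\int H_{\mu_x}(\mathcal P)\,d\mu$ for the middle equality, and the counting bound $H_\nu(\mathcal P)\le\log\#\{P\in\mathcal P:\nu(P)>0\}$ for the final inequality.

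Concerning the caveat you flag: it is real, but your proposed repair (showing the two $\sigma$-algebras agree mod $\mu$ for \emph{every} invariant $\mu$) cannot succeed in full generality. If $\mu(\partial\Q)>0$, then $\Q$ is not even a partition mod $\mu$ and the closed sets $W^u_\Q(x)$ fail to form one (points on shared boundary arcs lie in several of them), so the identification built into the statement of the lemma is simply unavailable. The resolution, within the paper's own logic, comes from the unlabelled lemma of the symbolic-dynamics section: an ergodic measure giving positive mass to a stable or unstable arc --- and $\partial\Q$ is a finite union of such arcs, as are its iterates --- is carried by a periodic orbit, hence has zero entropy. Since Lemma \ref{lem-h-by-W} is only ever invoked for ergodic measures (in Proposition \ref{prop-Wu-size} and in the entropy estimate of the periodic-extension section), one either has $\mu(\partial\Q)=0$, in which case your mod-$\mu$ identification is exact and your proof goes through verbatim, or else $\mu$ is periodic, $h(f,\mu)=0$, and the only consequence that is used downstream (the outer inequality) holds trivially. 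Adding that one remark closes the gap; everything else in your write-up is complete.
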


Remarquons que, vu l'expansivit\'e de $f$ et le petit diam\`etre de $\Q$, $h(f,\mu)=h(f,\mu,\Q)$.

\begin{proof}[Preuve de la Proposition]
Soit $0<\alpha<m_1\cdot h_\top(f)$. Fixons $T>\log 8/\alpha$ tel que:
 $$
   \forall n\geq T\quad \#\Q^n \leq e^{(h_\top(f)+\alpha)n}.
 $$
D'apr\`es (\ref{eq-mult-4}) et la compacit\'e, pour $r_1>0$ assez petit, une boule ferm\'e de rayon $r_1$ rencontre au plus $4$ 
\'el\'ements de $\Q^T$. Alors, pour tout $A\in\Sigma(f,\Q)$,
 $$
   \{\Q^T(x):W^u_\Q(x)=W^u(A) \text{ et }d(x,\partial W^u_\Q(x))<r_1) \leq 2\times 4
 $$
En effet, une fois connu le point extr\'emit\'e $z$ proche de $x$ (un choix binaire \'etant donn\'e le pass\'e), $\Q^T(x)$ se trouve parmi au plus $4$ \'el\'ements de $\Q^T$ rencontrant $\overline{B(z,r_1)}$.

Soit $\mu$ une mesure ergodique et invariante. Posons $M_1:=\{x\in M:d(x,\partial W^u_\Q(x))\leq r_1\}$.
Le lemme \ref{lem-h-by-W} donne:
 \begin{multline*}
   h(f,\mu) \leq \int_{M_1} \frac1T \log 8 \,d\mu \\
     + \int_{M\setminus M_1}(h_\top(f)+\alpha)
      \leq h_\top(f)-\mu(M_1)h_\top(f)+\alpha:= h_1
 \end{multline*}
Si  $\mu(M_1)\geq  m_1 >\frac{\alpha}{h_\top(f)}$, alors on obtient une borne non-triviale:
 $$
   h(f,\mu)\leq h_1:=h_\top(f)-m_1h_\top(f)+\alpha<h_\top(f).
 $$
\end{proof}

\section{Dynamique symbolique}

La partition $\Q$ pr\'ec\'edemment construite permet de d\'efinir une dynamique
symbolique dont on montre ci-dessous qu'elle repr\'esente fid\`element les mesures
maximales de $f$.

\begin{definition}
Si $\mathcal P$ est une collection d'ouverts disjoints et d'union dense $M'$ dans $M$, alors la \new{dynamique symbolique} est d\'efinie comme
le sous-d\'ecalage
 $$
    \Sigma(f,\mathcal P) := \overline{\{\dots\mathcal P(f^{-1}x)\mathcal P(x)\mathcal P(fx)\dots:x\in M'\}}\quad \sigma((A_n)_{n\in\ZZ}) =
   (A_{n+1})_{n\in\ZZ}
 $$
o\`u la fermeture est prise dans $\mathcal P^\ZZ$, $\mathcal P$ \'etant muni de la topologie discr\`ete. La projection 
$p:\Sigma(f,\mathcal P)\to M$ est d\'efinie par $\{p(x)\}=\bigcap_{n\in\ZZ} \overline{A_n}$, o\`u cela a un sens.
\end{definition}

On v\'erifie ais\'ement que $p\circ\sigma = f\circ p$ o\`u cela a un sens et que
 $$
   \Sigma(f,\mathcal P) = \{ A\in\mathcal P^\ZZ: \forall n<m\; [A_n\dots A_m]\ne
     \emptyset\}.
 $$

\begin{proposition}
Soit $\Q$ une partition\footnote{Abus de language: $\Q$ est en fait une collection d'ouverts disjoints d'union dense dans $M$.} de $M$ en l'int\'erieurs de carr\'es $us$ de diam\`etres strictement plus petits que la constante d'expansivit\'e.
Soit $\Sigma(f,\Q)$ et $p:\Sigma(f,\mathcal P)\to M$ comme ci-dessus.
 \begin{itemize}
   \item $p$ est bien d\'efinie sur tout $A\in\Sigma(f,\mathcal P)$ et continue.
   \item $p$ est au plus $4$ sur $1$. En particulier, toute mesure de probabilit\'e invariante se rel\`eve et $p$ 
     pr\'eserve l'entropie mesur\'ee et topologique.
   \item Soit $\mu,\mu'$ des mesures de probabilit\'e de $\Sigma(f,\mathcal P)$. Supposons $p_*\mu$ ap\'eriodique. Alors $p_*\mu=p_*\mu'$ implique $\mu=\mu'$ et  $p_*\mu$ est ergodique si et seulement si $\mu$ est ergodique.
 \end{itemize}
\end{proposition}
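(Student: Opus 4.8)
The plan is to prove the three assertions in turn, the core inputs being expansivity (fact (B)), the bounded multiplicity $\mult(\Q)\le4$ from \eqref{eq-mult-4}, and the contraction/expansion along the invariant foliations (fact (D)).

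First I would settle well-definedness and continuity. For $A\in\Sigma(f,\Q)$ put $K_N(A)=\bigcap_{|n|\le N}\overline{f^{-n}A_n}$. Admissibility (each block $[A_n\cdots A_m]\ne\emptyset$) makes every $K_N(A)$ a nonempty compact set, and the $K_N(A)$ are nested, so $\bigcap_N K_N(A)\ne\emptyset$ by compactness. If $y,z$ both lie in this intersection then $f^ny,f^nz\in\overline{A_n}$ for every $n$, whence $d(f^ny,f^nz)\le\diam A_n<r_0$ for all $n$, and expansivity forces $y=z$; thus the intersection is the single point $p(A)$. For continuity I would use the uniform form of expansivity: for each $\eps>0$ there is $N$ with $\diam\big(\bigcap_{|n|\le N}\overline{f^{-n}B_n}\big)<\eps$ uniformly over admissible blocks $B_{-N}\cdots B_N$ (compactness together with expansivity), so agreement of two sequences on $[-N,N]$ forces their images to be $\eps$-close.

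Next, for the four-to-one bound and entropy, I would first note that $p$ is onto (its image is closed and contains the dense set $M'$) and that invariant measures lift: given $f$-invariant $\nu$, pick any $\tilde\nu$ with $p_*\tilde\nu=\nu$ and let $\mu$ be a weak-$*$ limit of the averages $\frac1n\sum_{k<n}\sigma^k_*\tilde\nu$, so that $\mu$ is $\sigma$-invariant with $p_*\mu=\nu$. For the fibre bound, the idea is that $A\in p^{-1}(x)$ is pinned down by two binary data: near $x$ the local leaves $W^s(x),W^u(x)$ cut a neighbourhood into at most four quadrants, and a square $Q$ with $x\in\overline Q$ occupies exactly one of them; using fact (D) forward (resp.\ backward) one checks that fixing the stable side (resp.\ the unstable side) at $x$ determines the future (resp.\ the past) of $A$, so $A\mapsto(\text{stable side},\text{unstable side})$ is injective on $p^{-1}(x)$ and $\#p^{-1}(x)\le2\times2=4$, matching $\mult(\Q)\le4$. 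A finite-to-one topological factor map preserves entropy: measure-theoretically the fibres are finite so the relative entropy vanishes and $h(\sigma,\mu)=h(f,p_*\mu)$, while topologically Bowen's fibre inequality gives $h_\top(\sigma)=h_\top(f)$.

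For the third assertion the key reduction is that $p$ is injective off the grand orbit $R:=\bigcup_{n\in\ZZ}f^n(\partial\Q)$ of the boundary $\partial\Q=\bigcup_{Q\in\Q}\partial Q$: if $x\notin R$ then $f^nx\in M'$ for all $n$, so $A_n=\Q(f^nx)$ is forced and $p^{-1}(x)$ is a singleton. It then remains to show $\nu(R)=0$ for every aperiodic invariant $\nu$; by ergodic decomposition I may take $\nu$ ergodic. Splitting $\partial\Q=N^s\cup N^u$ into stable and unstable arcs and $R=R^s\cup R^u$ accordingly, each piece is $f$-invariant, so some part, say $R^u$, has $\nu(R^u)=1$. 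But $R^u$ lies in the countably many unstable leaves through the finitely many arcs of $N^u$, so some single leaf $L$ has $\nu(L)>0$; since $\nu(f^nL)=\nu(L)$ and masses sum to at most $1$, the leaves $f^nL$ cannot be pairwise distinct, i.e.\ $L$ is periodic, $f^kL=L$. As unstable leaves on $\TT^2$ are non-compact, $f^k|_L$ is conjugate to a uniform expansion of $\RR$ fixing the periodic point of $L$, whose only invariant probability is the atom there; hence $\nu$ is carried by a periodic orbit, contradicting aperiodicity. Thus $\nu(R)=0$, $p$ is injective $\nu$-a.e., and by Lusin--Souslin it restricts to a Borel isomorphism between full-measure invariant sets. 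Uniqueness of the lift and the two-way transfer of ergodicity follow formally: $p_*\mu=p_*\mu'=\nu$ aperiodic forces $\mu=\mu'$; a factor of an ergodic measure is ergodic, and conversely if $\nu$ is ergodic its invariant lift is unique, hence extreme, hence ergodic. The routine parts are the first paragraph and the entropy/lifting bookkeeping; the two delicate points are upgrading the ``two binary choices'' picture to a rigorous proof of $\#p^{-1}(x)\le4$ \emph{without} a Markov property—where the foliation geometry must be used to propagate a side-choice along the whole orbit—and the nullity $\nu(R)=0$. I expect the latter to be the main obstacle, its crux being the absence of compact unstable leaves, which forces a measure trapped on finitely many unstable leaves to collapse onto a periodic orbit; this is precisely where aperiodicity is indispensable.
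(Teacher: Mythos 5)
Your proposal is correct and follows the same overall skeleton as the paper (expansivity for well-definedness and continuity; a multiplicity-4 bound for the fibres; reduction of injectivity to the statement that an aperiodic measure gives zero mass to the orbit of $\partial\Q$, hence to leaves of the foliations), but it diverges from the paper at the two key sub-steps. For the fibre bound, the paper simply writes $\#p^{-1}(x)\le\sup_{n\ge1}\mult(\Q^n)=4$, i.e.\ it reuses the already-established estimate \eqref{eq-mult-4}, which encodes exactly the quadrant picture you describe: distinct preimages of $x$ eventually give distinct elements of the iterated partition $\Q^n$ (itself a partition into $us$-squares) whose closures all meet the orbit of $x$. Your ``propagate a side-choice along the orbit'' argument is the same geometry done by hand; citing \eqref{eq-mult-4} would have spared you the step you yourself flag as delicate. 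For the crucial lemma (positive measure on a single stable or unstable leaf forces the measure to be periodic), your proof is genuinely different from the paper's: you use the global structure of the leaf --- periodicity of the leaf by a mass-and-disjointness argument, then the fact that a uniform expansion of $\RR$ has the Dirac mass at its fixed point as unique invariant probability --- whereas the paper argues locally, by Poincar\'e recurrence plus fact (D): one finds a recurrent point $z$ on the leaf with positive local leaf-measure, so that some $f^p$ contracts a leaf-ball at $z$ into itself, and the ergodic theorem then forces the measure onto the resulting periodic orbit. Both are valid; yours needs the standard (but unstated in the paper) fact that stable/unstable leaves of an Anosov diffeomorphism are injectively immersed lines --- in particular your appeal to $\TT^2$ is unnecessary and slightly off-key, since the theorem is stated for a general compact surface $M$ --- while the paper's recurrence-plus-contraction argument is softer and more local, which is what makes it adaptable to non-uniform settings, the stated purpose of the note. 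Your treatment of measure lifting (weak-$*$ averaging) and of the two-way ergodicity transfer via uniqueness of the lift matches the paper's.
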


En particulier, on a imm\'ediatement:

\begin{corollary}
$p$ induit une bijection entre les mesures maximales de $f$ et de $\Sigma(f,\Q)$.
\end{corollary}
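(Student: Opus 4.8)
Le plan est de v\'erifier que l'application $\Phi:\nu\mapsto p_*\nu$ r\'ealise la bijection annonc\'ee, en appliquant syst\'ematiquement les trois points de la proposition pr\'ec\'edente. L'observation pr\'eliminaire d\'ecisive est que \emph{toute} mesure maximale, de $f$ comme de $\Sigma(f,\Q)$, est ap\'eriodique, de sorte que le troisi\`eme point (qui suppose $p_*\mu$ ap\'eriodique) sera utilisable. En effet, le fait (C) donne $h_\top(f)>0$, donc une mesure maximale est d'entropie strictement positive; or l'expansivit\'e (fait (B)) entra\^\i ne que $\operatorname{Fix}(f^n)$ est fini pour chaque $n$, donc que l'ensemble $\operatorname{Per}(f)$ des points p\'eriodiques est d\'enombrable et ne porte que des mesures invariantes d'entropie nulle. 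Toute mesure ergodique d'entropie positive v\'erifie ainsi $\mu(\operatorname{Per}(f))=0$.

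Je montrerais d'abord que $\Phi$ est bien d\'efinie. Le deuxi\`eme point fournit $h_\top(\Sigma(f,\Q))=h_\top(f)$ ainsi que $h(\sigma,\nu)=h(f,p_*\nu)$; si $\nu$ est maximale pour $\sigma$, la mesure $f$-invariante $p_*\nu$ est donc d'entropie maximale. Il reste \`a voir qu'elle est ergodique. Comme $\operatorname{Per}(f)$ est $f$-invariant et $p\circ\sigma=f\circ p$, l'ensemble $p^{-1}(\operatorname{Per}(f))$ est $\sigma$-invariant, donc de $\nu$-mesure $0$ ou $1$ par ergodicit\'e de $\nu$; la valeur $1$ est exclue, car $p_*\nu$ serait alors port\'ee par les points p\'eriodiques et d'entropie nulle. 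Ainsi $p_*\nu$ est ap\'eriodique, et le troisi\`eme point donne qu'elle est ergodique (puisque $\nu$ l'est): c'est bien une mesure maximale de $f$.

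L'injectivit\'e de $\Phi$ est alors imm\'ediate: si $\nu,\nu'$ sont maximales avec $p_*\nu=p_*\nu'$, cette mesure commune \'etant ap\'eriodique, la premi\`ere assertion du troisi\`eme point donne $\nu=\nu'$. Pour la surjectivit\'e, je partirais d'une mesure maximale $\mu$ de $f$: le deuxi\`eme point assure qu'elle se rel\`eve en une mesure $\sigma$-invariante $\nu$ avec $p_*\nu=\mu$, et la pr\'eservation de l'entropie donne $h(\sigma,\nu)=h(f,\mu)=h_\top(f)=h_\top(\Sigma(f,\Q))$, donc $\nu$ est d'entropie maximale. Comme $\mu$ est ergodique d'entropie positive, elle est ap\'eriodique, et le troisi\`eme point entra\^\i ne alors que $\nu$ est ergodique; $\nu$ est donc une mesure maximale relevant $\mu$.

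Le point d\'elicat n'est pas la combinatoire de la bijection, qui d\'ecoule formellement de la proposition, mais la v\'erification de l'ap\'eriodicit\'e requise pour invoquer le troisi\`eme point: c'est l\`a qu'interviennent de fa\c{c}on essentielle $h_\top(f)>0$ et l'expansivit\'e (via la finitude des points p\'eriodiques de chaque p\'eriode), qui garantissent que les mesures maximales \'evitent $\operatorname{Per}(f)$.
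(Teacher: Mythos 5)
Your proof is correct and takes essentially the same route the paper intends: the corollary is stated there as an immediate consequence of the proposition, and your argument is exactly that formal deduction, with the one non-trivial ingredient — that maximal measures are ap\'eriodiques because they have entropy $h_\top(f)>0$ while periodic measures have entropy nulle — correctly supplied so that the third point of the proposition applies.
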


Le lemme suivant est utile. 

\begin{lemma}
Soit $\mu$ une mesure invariante et ergodique. Si $\mu(\mathcal F^s(x))>0$ (plus pr\'ecis\'ement: s'il existe une partie mesurable de $\mathcal F^s(x_0)$ de mesure non-nulle) alors $\mu$ est p\'eriodique.
\end{lemma}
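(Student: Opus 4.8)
The plan is to reduce the statement to producing a single periodic stable leaf of positive mass, and then to analyse the dynamics of $f$ along that leaf. First I would record that $f$ permutes the stable leaves (by the invariance in fact (A)(i)) and preserves their $\mu$-measure, since $f$ is a homeomorphism and $\mu(f(L))=\mu(f^{-1}(f(L)))=\mu(L)$ for every Borel set $L$. As the stable leaves are pairwise disjoint and $\mu$ is a probability measure, only countably many of them carry positive mass, and the hypothesis guarantees that $\mathcal{F}^s(x_0)$ is one of them: a measurable subset of positive mass forces $\mu(\mathcal{F}^s(x_0))>0$, and $\mathcal{F}^s(x_0)$ is Borel, being a countable increasing union $\bigcup_{n\ge0}f^{-n}(W^s_{\mathrm{loc}}(f^nx_0))$ of compact arcs. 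The maximal value $p^\ast:=\sup\{\mu(L):\mu(L)>0\}$ is attained by only finitely many leaves (at most $\lfloor 1/p^\ast\rfloor$ of them), and $f$ acts on this finite set as a permutation, hence some power $f^N$ with $N\ge1$ fixes one of them. Thus there is a stable leaf $L$ with $\mu(L)>0$ and $f^N(L)=L$.

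Next I would exploit the uniform contraction along stable leaves. Since the leaves are tangent to $E^s$ and $\|f'(x)|E^s_x\|\le\kappa<1$, the map $f$ shrinks the length of every arc contained in a stable leaf by a factor at most $\kappa$; therefore $f^N$ contracts the intrinsic (arc-length) distance $d_L$ on $L$ by $\kappa^N<1$. Fixing $y_0\in L$, the forward orbit $y_k:=f^{Nk}(y_0)$ then satisfies $d(y_{k+1},y_k)\le\kappa^{Nk}\,d_L(y_0,f^Ny_0)$, so it is Cauchy in the compact manifold $M$ and converges to a point $q$, which is a fixed point of $f^N$ by continuity. Because any two points of $L$ are joined by a leaf-arc of finite length, the same estimate shows $d(f^{Nk}(z),q)\le\kappa^{Nk}d_L(z,y_0)+d(y_k,q)\to0$, i.e. $f^{Nk}(z)\to q$ for every $z\in L$.

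I would then transfer this convergence to the measure. The restriction $\mu|_L$ (a finite measure on $M$ supported on $L$) is $f^N$-invariant: since $f^{-N}(L)=L$ we have $\mathbf 1_L\circ f^N=\mathbf 1_L$, whence for any continuous $\phi$ on $M$,
$$
\int \phi\circ f^N\,d(\mu|_L)=\int (\phi\mathbf 1_L)\circ f^N\,d\mu=\int \phi\mathbf 1_L\,d\mu=\int \phi\,d(\mu|_L).
$$
Iterating this identity and using $f^{Nk}\to q$ pointwise on $L$ together with dominated convergence gives $\int\phi\,d(\mu|_L)=\lim_k\int\phi(f^{Nk}y)\,d(\mu|_L)(y)=\phi(q)\,\mu(L)$, so $\mu|_L=\mu(L)\,\delta_q$. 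In particular $\mu$ has an atom of mass at least $\mu(L)>0$ at the periodic point $q$.

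Finally, ergodicity closes the argument: the orbit $\{q,fq,\dots,f^{N-1}q\}$ is a finite $f$-invariant set of positive $\mu$-measure, hence of full measure, so $\mu$ is the equidistribution on a periodic orbit, i.e. $\mu$ is periodic. I expect the only delicate point to be the global leaf analysis of the second step — that $f$ contracts the intrinsic leaf metric uniformly and that the resulting Cauchy orbits single out one common limit $q\in M$ — since this is where the non-compactness of the immersed leaf $L$ and the distinction between the leaf topology and the ambient topology must be handled with care; by contrast the permutation argument and the ergodic conclusion are routine.
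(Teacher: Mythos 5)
Your proof is correct, but it reaches the conclusion by a genuinely different route than the paper. The paper obtains leaf-invariance from Poincar\'e recurrence: a positive-measure subset of $\mathcal F^s(x_0)$ must return to itself, and since distinct leaves are disjoint, a return $f^pz\in\mathcal F^s(x_0)$ forces $f^p\mathcal F^s(x_0)=\mathcal F^s(x_0)$; the contraction of fact (D) then makes $f^p$ a contraction of a small leaf-ball $B(z,r)\cap\mathcal F^s(z)$ into itself, and the Birkhoff ergodic theorem (applied to the positive-measure set of points attracted to the resulting periodic point) identifies $\mu$ with the periodic measure. You instead obtain leaf-invariance by a counting argument --- only countably many leaves carry mass, finitely many carry the maximal mass $p^\ast$, and $f$ permutes those, so some power $f^N$ fixes a leaf $L$ of positive mass --- then use the contraction of the intrinsic metric of the \emph{whole} leaf to send all of $L$ to a single fixed point $q$ of $f^N$, deduce $\mu|_L=\mu(L)\,\delta_q$, and finish with ergodicity applied to the orbit of the atom. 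Both arguments run on the same engine (stable contraction produces a periodic attracting point; ergodicity concludes), but the trade-offs differ. Your version avoids recurrence and yields the clean atomic statement $\mu|_L=\mu(L)\,\delta_q$, at the price of global facts about the leaf: that it is Borel (your $\sigma$-compact exhaustion by $f^{-n}(W^s_{\mathrm{loc}}(f^nx_0))$ handles this), that its intrinsic metric is finite, and that length contracts with the constant-free bound $\|f'(x)|E^s_x\|\le\kappa$ available from the theorem's hypothesis --- with a multiplicative constant in front, as in the more robust formulation (D), you would need an adapted metric. The paper's version is purely local --- only small leaf-balls and fact (D) are used --- which is precisely what survives in non-uniform settings where global invariant leaves are unavailable; that flexibility is the point of the paper's announced program, and it explains why the author argues locally even though your global argument is perfectly valid here.
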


\begin{proof}
Par r\'ecurrence, on peut trouver $z\in\mathcal F^s(x)$ et $p\geq1$ tels que, pour tout $r>0$, $\mu(B(z,r)\cap\mathcal 
F^s(z))>0$ et $f^pz$ est proche de $z$.  (D, page 1) montre alors que $f^p$ est une contraction de $B(z,r)\cap\mathcal 
F^s(z)$ dans  lui-m\^eme. Le th\'eor\`eme ergodique montre alors que $\mu$ est une mesure p\'eriodique port\'ee par 
l'orbite de $p$. 
\end{proof}

\begin{proof}[Preuve de la proposition]
D'une part, $f$ \'etant expansif et $\Q$ de petit diam\`etre, $p$ est bien d\'efinie et continue. D'autre part,
 $$
    \# p^{-1}(x) \leq \sup_{n\geq1} \mult(\Q^n) = 4.
 $$
La pr\'eservation de l'entropie est \'evidente.
La fibre de $p$ \'etant en particulier compacte toute mesure invariante de $f$ se rel\`eve en une mesure invariante de $\Sigma(f,\mathcal P)$.

Finalement, supposons $p(A)=p(B)$. Si $A\ne B$, il existe $n\in\ZZ$ tel que $A_n\ne B_n$ et $f^nx\in \overline{A_n}\cap\overline{B_n}\subset\partial\mathcal P$. Donc $p(\mu)=p(\mu')$ implique, si $\mu\ne \mu'$,
que $\mu\left(\partial\mathcal P\right)>0$. Mais $\partial\mathcal P$ est une union finie de segments
de feuilles stables et instables. Donc $\mu(\mathcal F^\sigma(x_0))>0$ pour un certain $x_0\in M$ et $\sigma=s$ ou $u$.
Le lemme ci-dessus montre qu'alors $\mu$ est p\'eriodique, une contradiction.
Si $p_*\mu$ est ergodique alors les composantes ergodiques de $\mu$ sont autant de rel\`evements de $p_*\mu$. D'apr\`es
l'unicit\'e de ce rel\`evement, $\mu$ est \'egalement ergodique. L'implication r\'eciproque est un fait totalement 
g\'en\'eral.
\end{proof}
 
\section{Extension p\'eriodique}

On a vu que les vari\'et\'es symboliques stables et instables sont de taille
semi-uniforme. On va montrer qu'on peut faire en sorte que ces vari\'et\'es symboliques traversent bien $\Q$. La difficult\'e est que la borne inf\'erieure donn\'ee par la proposition \ref{prop-Wu-size} est typiquement plus petite que le diam\`etre de la partition \`a moins que $f$ ne soit fortement dilatante.  Nous utilisons la construction suivante.

Soit un entier $T\geq1$. On pose $M_T:=M\times\ZZ/T\ZZ$ et $f_T(x,k):=(fx,k+1)$. 

L'extension $\pi:( M_T, f_T)\to(M,f)$ est continue et compacte. Donc toute collection de mesures $f$-invariantes et distinctes se rel\`eve en une collection de mesures $f_T$-invariante et distinctes. L'extension est finie donc pr\'eserve l'entropie mesur\'ee. Ainsi il suffit de prouver le th\`eor\`eme pour $f_T$, pour un $T\geq1$ commode.

\begin{proposition}\label{prop-ext-Wsu-cross}
Pour $T$ assez grand, il existe une partition $Q_T$ de $M_T$, partition en carr\'es $us$, telle que, 
 \begin{equation}\label{eq-BT}
    B_T:=\{ x\in M_T: W^s_{\Q_T}(x) \text{ et }  W^u_{\Q_T}(x) \text{ traversent } \Q(x)\}
 \end{equation}
est de mesure $>0$ par rapport \`a toute mesure $f_T$-invariante de grande entropie.
\end{proposition}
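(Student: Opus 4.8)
The plan is to deduce traversal from Proposition \ref{prop-Wu-size} applied not to $f$ but to the power $f^T$, the periodic extension being the device that turns $f^T$-symbolic varieties into $f_T$-symbolic varieties. First I would record the stable counterpart of Proposition \ref{prop-Wu-size}, obtained by applying it to $f^{-1}$: for every $m_1>0$ there are $r_1>0$ and a threshold below $h_\top(f)$ such that every measure of large entropy gives mass $<m_1$ to $\{x:d(x,\partial W^s_\Q(x))<r_1\}$. As noted in the remark above, the obstruction is that the length $r_1$ produced by the proof is much smaller than $\diam\Q$, so these estimates alone never force a single variety to cross $\Q(x)$.

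The key observation is that $r_1$ is exponentially small only because of the onset time $T_0$ at which $\#\Q^n\le e^{(h_\top(f)+\alpha)n}$ begins to hold; for the power $g:=f^T$ one has $h_\top(g)=T\,h_\top(f)$, so a single $g$-step already exceeds the entropy and the onset time collapses to $1$. Consequently the length furnished by Proposition \ref{prop-Wu-size} for $g$ is governed only by the purely geometric constant $\rho(\Q)$ — the largest radius within which a ball meets at most $\mult(\Q)\le 4$ elements of $\Q$ — which is comparable to the inradius of the (roughly isometric) squares of $\Q$ and hence is enough to cross them. Thus the $g$-symbolic varieties $\bigcap_{j\ge1}\overline{f^{jT}\Q(f^{-jT}x)}$ traverse $\Q(x)$ off a set of small measure. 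I would then design $\Q_T$ on $M_T$ to agree with $\Q$ on the level $\{k=0\}$ and to be coarse on the remaining levels, so that the level-$0$ variety $W^u_{\Q_T}(x,0)$ coincides with this $g$-variety up to the cuts coming from the intermediate levels, and symmetrically for $W^s_{\Q_T}$.

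For positivity I would apply the $f^T$-estimate to the fibre measure $\nu_0$, the level-$0$ conditional of an $f_T$-invariant $\hmu$: it is $f^T$-invariant with $h(f^T,\nu_0)=T\,h(f,\pi_*\hmu)$, which stays above the threshold for every $\hmu$ of large entropy. Hence the set where both the stable and the unstable $g$-varieties cross $\Q$ has $\nu_0$-measure at least $1-2m_1$, and the corresponding subset of $M_T$ on the level $\{k=0\}$ lies in $B_T$. Since $f_T$ permutes the $T$ levels cyclically, $\hmu(\{k=0\})=1/T$, and one already gets $\hmu(B_T)\ge\frac1T(1-2m_1)>0$, which is all that is required. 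The factor $1/T$ is harmless precisely because the estimate is applied to the fibre measure rather than obtained by slicing a global $f$-estimate.

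The hard part will be the intermediate-level cuts: $W^u_{\Q_T}(x,0)$ is constrained not only at the return times $jT$, where $f^T$ governs it, but also at the off-levels, and I must choose the off-level squares so that these extra constraints do not shorten either variety below $\diam\Q$, simultaneously in the stable and unstable directions, while keeping every element of $\Q_T$ below the expansivity scale $r_0$ so that the coding stays faithful, as in the preceding section. A naive coarse choice controls each off-level only up to an $m_1$-error, and the $T$ levels would then accumulate an error of order $Tm_1$ that destroys positivity; the resolution I expect is to align the stable (resp. unstable) boundaries of the off-level squares with the dynamical transports $f^i(\partial^s\Q)$ (resp. $f^{-i}(\partial^u\Q)$), so that the intermediate constraints are absorbed into the level-$0$ ones and a single application of the $f^T$-estimate suffices. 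This alignment is exactly the Markov compatibility that the next section exploits, so that the present proposition and the Markov structure are naturally established together.
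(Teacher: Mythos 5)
Your construction hinges on a single claim: that Proposition \ref{prop-Wu-size}, applied to the power $g=f^T$ with the \emph{same} partition $\Q$, yields a lower bound on $d(x,\partial W^u_{\Q,g}(x))$ at the geometric scale $\rho(\Q)$ because the ``onset time'' collapses to $1$. This claim is unfounded, and the way it fails is instructive. The proof of Proposition \ref{prop-Wu-size} runs through Lemma \ref{lem-h-by-W}, which bounds the \emph{partition} entropy $h(g,\nu,\Q)$; to convert that bound into a statement about measures of large entropy one needs the identity $h(g,\nu,\Q)=h(g,\nu)$, which the paper gets from expansivity of $f$ together with $\diam\Q<r_0$. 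For $g=f^T$ this identity fails irreparably: for any transformation and any fixed finite partition one has $h(g,\nu,\Q)\le\log\#\Q$, whereas your intended entropy threshold is of order $(1-m_1)h_\top(g)=(1-m_1)\,T\,h_\top(f)$, which dwarfs $\log\#\Q$ once $T$ is large. So your ``Proposition for $g$'' is either a statement about $h(g,\nu,\Q)$ --- in which case it is vacuous, since no measure satisfies the hypothesis --- or a statement about $h(g,\nu)$ --- in which case the proof does not apply, because $\Q$ is very far from generating for $f^T$. This is not a removable technicality: the expansivity constant of $f^T$ genuinely degenerates exponentially in $T$ (for a linear Anosov map, if $w\neq0$ is a $T$-periodic point close to $0$, then $d(f^{nT}x,f^{nT}(x+w))=|w|$ for all $n\in\ZZ$), and the only way to restore generation, namely replacing $\Q$ by $\Q^T=\Q\vee f^{-1}\Q\vee\dots\vee f^{-T+1}\Q$, makes your geometric constant $\rho(\Q^T)$ exponentially small again, since the elements of $\Q^T$ are exponentially thin in the unstable direction. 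The power trick therefore cannot manufacture the crossing scale.

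The paper's route is different precisely at this point and avoids the trap. It never multiplies the entropy: the extension $f_T$ has the \emph{same} entropy as $f$, and the scale mismatch is resolved by using two partitions at two scales --- the coarse $\Q_0$ (diameter $<r_0$) on levels $1,\dots,T-1$, whose symbolic varieties are long ($\geq r_1$ semi-uniformly, by Proposition \ref{prop-Wu-size} applied to $f$ itself), and a fine partition $\Q_1$ of diameter $<r_1$ on level $0$, which is what those varieties are asked to traverse. Keeping the coarse coding at the intermediate levels is exactly what keeps $\Q_T$ generating, i.e.\ keeps the entropy identity alive; your pure power construction throws that information away. The residual issue --- the cuts inflicted on $W^u_{\Q_T}$ by the fine level-$0$ boundaries at earlier multiples of $T$ --- is precisely the ``hard part'' you defer, and it is not handled by any variant of Proposition \ref{prop-Wu-size}: the paper proves a separate counting lemma showing that this cut set $C^0_T$ has measure $<\eps_1/T$ for every measure of large entropy once $T$ is large. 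Your proposed fix (aligning off-level boundaries with the iterates $f^i(\partial^s\Q)$) is not carried out and amounts to demanding Markov compatibility of the partition --- the very construction this approach is designed to avoid. The one sound piece of your write-up, the positivity bookkeeping through the level-$0$ fiber measure via Abramov's formula, is correct but has nothing to feed on once the $f^T$ estimate collapses.
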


\begin{proof}
Le lemme \ref{lem-find-Q-r} fournit une partition $\Q_0$ en carr\'es de diam\`etre $<r_0$.

Soit $0<\eps_1<1$. La proposition \ref{prop-Wu-size} appliqu\'ee deux fois donne $r_1>0$ et $h_1>0$ tels que, sur un ensemble $B_0$ de mesure au moins $1-2\eps_1$ pour toute mesure ergodique d'entropie $>h_1$, $d(x,\partial W^s_{\Q_0}(x))>r_1$ et $d(x,\partial W^u_{\Q_0}(x))>r_1$. 

Soit $\Q_1$ une nouvelle partition finie de $M$ en carr\'es de diam\`etre $<r_1$. Pour $x\in B_0$, $W^u_{\Q_0}(x)$ traverse $\Q_1$.
On passe \`a l'extension pour un entier $T>>1$. On pose
 $$
    \Q_T:=\Q_1\times\{0\}\cup \Q_0\times\{1\}\cup\dots\cup \Q_0\times\{T-1\}
 $$
Le lemme ci-dessous donne $h_2<h_\top(f)$ tel que, en posant $B_T^0:=B_T\cap (M\times\{0\})$ et en remarquant que $x\in M\times\{0\}\setminus B^0_T \subset
(M\setminus B_0)\times\{0\}\cup C_T^0$:
 $$
    \mu_T(B^0_T)>\frac{1-4\eps_1}{T} \implies h(f_T,\mu_T)<\max(h_1,h_2)
 $$
avec $\max(h_1,h_2)<h_\top(f)$, d\`es que $T$ est choisi assez grand.
\end{proof}

\begin{lemma}
Pour tout $\eps_1>0$, il existe $T_1<\infty$ et $h_2<h_\top(f)$ tels que pour tout $T\geq T_1$ et toute mesure
ergodique satisfaisant $h(f_T,\mu_T)>h_2$:
 $$
   \mu_T(C^0_T)<\frac{\eps_1}T
   \text{ o\`u }
      C^0_T:=\{(x,0)\in M\times\{0\}:\pi W^u_{\Q_T}(x,0)\subsetneq W^u_\Q(x) \}.
 $$
\end{lemma}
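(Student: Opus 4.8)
Le plan est de ramener l'appartenance à $C^0_T$ à une condition géométrique le long de l'orbite passée, puis de la contrôler par l'invariance de l'extension et une estimation de type Proposition \ref{prop-Wu-size}. Quitte à choisir $\Q_1$ raffinant $\Q_0$, la partition de niveau $0$ de $\Q_T$ (à savoir $\Q_1$) raffine $\Q_0$, de sorte que $\pi W^u_{\Q_T}(x,0)\subseteq W^u_\Q(x)$: l'inclusion provient des contraintes supplémentaires imposées par $\Q_1$ aux instants $-n$ avec $T\mid n$. Elle n'est stricte que si l'une de ces contraintes coupe effectivement l'arc instable. D'après le fait (D) (contraction des feuilles instables sous $f^{-1}$), l'arc $W^u_\Q(x)$ tiré en arrière par $f^{-kT}$ a une demi-longueur $\le C_0\kappa_0^{kT}\diam\Q_0$; pour qu'il soit coupé par le bord stable de $\Q_1$ à l'instant $-kT$, il faut donc que $f^{-kT}x$ soit à distance $<C_0\kappa_0^{kT}\diam\Q_0$ de $\partial^s\Q_1$. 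En posant $N_k:=\{z\in M: d(z,\partial^s\Q_1)<C_0\kappa_0^{kT}\diam\Q_0\}$, on obtient ainsi $\pi C^0_T\subseteq\bigcup_{k\ge1}f^{kT}N_k$.

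Ensuite, la mesure $\mu^{(0)}_T:=\mu_T(\,\cdot\times\{0\})$ est de masse $1/T$ et $f^T$-invariante, car $f_T^T$ préserve le niveau $0$ et y agit par $f^T$. Par conséquent
$\mu_T(C^0_T)=\mu^{(0)}_T(\pi C^0_T)\le\sum_{k\ge1}\mu^{(0)}_T(f^{kT}N_k)=\sum_{k\ge1}\mu^{(0)}_T(N_k)$,
et l'on est ramené à majorer la $\mu^{(0)}_T$-mesure de voisinages de plus en plus fins du bord stable $\partial^s\Q_1$, qui est une union finie fixée d'arcs stables. Moralement, ces voisinages sont du type contrôlé par la Proposition \ref{prop-Wu-size} appliquée à $\Q_1$ (un point proche de $\partial^s\Q_1$ a son arc instable coupé), mais le contrôle requis ici est quantitatif.

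Il reste donc à établir $\sum_k\mu^{(0)}_T(N_k)<\eps_1/T$, soit $\sum_k\nu(N_k)<\eps_1$ pour la probabilité $f^T$-invariante $\nu:=T\mu^{(0)}_T$, d'entropie $h(f^T,\nu)=T\,h(f,\mu)>Th_2$. Comme les rayons décroissent géométriquement, une borne uniforme en puissance $\nu(\{z: d(z,\partial^s\Q_1)<\eta\})\le C\eta^\beta$ suffit: elle fournit $\sum_k\nu(N_k)\le C(C_0\diam\Q_0)^\beta\,\kappa_0^{T\beta}/(1-\kappa_0^{T\beta})$, quantité $<\eps_1$ dès que $T\ge T_1$. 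C'est là l'obstacle principal: la Proposition \ref{prop-Wu-size} ne donne qu'un module logarithmique (la majoration de $\mu(M_1)$ décroît comme $1/\log(1/\eta)$), insuffisant puisque $\sum_k 1/\log(1/\eta_k)\sim\frac1T\sum_k\frac1k$ diverge. J'obtiendrais l'exposant $\beta>0$ uniforme à partir de l'hyperbolicité uniforme — l'exposant de Lyapunov instable vérifie $\chi^u\le\log\|f'\|$ — et de $h(f,\mu)>h_2>0$, via la formule de dimension en surface (Young): la dimension transverse au feuilletage stable vaut $h/\chi^u\ge h_2/\log\|f'\|=:\beta$, d'où $\nu(N(\eta))\lesssim\eta^\beta$ uniformément. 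Une voie plus élémentaire, fidèle à l'esprit de cette note, consisterait à raffiner le comptage de la Proposition \ref{prop-Wu-size} en une estimation à l'échelle $\eta$, itérée sur une grille géométrique d'échelles, afin d'en extraire un taux polynomial; c'est l'étape que j'attends la plus technique.
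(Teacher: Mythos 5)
Your geometric reduction is sound as far as it goes: the cut must indeed occur at a time $-nT$ where the fiber carries $\Q_1$, the contraction (D) forces $f^{-nT}x$ into a $\kappa_0^{nT}$-neighborhood of $\partial^s\Q_1$, and the transfer $\mu^{(0)}_T(f^{kT}N_k)=\mu^{(0)}_T(N_k)$ by $f^T$-invariance is correct. But the step you yourself flag as the obstacle is not merely technical: the uniform bound $\nu(N(\eta))\le C\eta^\beta$, with $C,\beta$ independent of the measure in the class $\{h>h_2\}$, is \emph{false}. A specification-type construction shows this: fix $\eps>0$ with $h_\top(f)-\eps>h_2$, fix a point $z\in\partial^s\Q_1$ and a scale $\eta$; the set $K_0=\{x:\forall j\in\ZZ,\ f^{jm}x\in\overline{B(z,\eta)}\}$ is $f^m$-invariant and carries, by concatenation of orbit segments, topological entropy at least $m h_\top(f)-C'\log(1/\eta)$; taking $m\approx C'\log(1/\eta)/\eps$, an ergodic measure of maximal entropy $\nu_0$ of $(K_0,f^m)$ and $\nu=\frac1m\sum_{i<m}f^i_*\nu_0$ (which is $f$-ergodic, since any $f$-invariant set has $\nu_0$-measure $0$ or $1$), one gets $h(f,\nu)\ge h_\top(f)-\eps>h_2$ while $\nu(N(\eta))\ge 1/m\gtrsim \eps/\log(1/\eta)$. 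So the logarithmic modulus --- exactly what makes your series $\sum_k\nu(N_k)$ diverge --- is essentially optimal uniformly over the class, and no choice of $T_1$ repairs this. Your proposed rescue via Young's dimension formula cannot work either: local dimension is an almost-everywhere \emph{asymptotic} statement, with constants depending on the measure (Egorov), valid for balls centred at typical points rather than for tubes around a fixed curve, and when you sum over infinitely many scales $k$ the exceptional sets of fixed measure already destroy the estimate.

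The paper's proof avoids any quantitative neighborhood estimate, and this is the idea your proposal is missing: membership in $C^0_T$ does not merely place $f^{-nT}x$ near $\partial^s\Q_1$; it \emph{pins down the entire block} $\Q_0^{nT}(f^{-nT}x)$ of $nT$ symbols once $W^u_{\Q_0}(f^{-nT}x)$ and $\Q_1(f^{-nT}x)$ are known, up to the binary choice of the intersection point $z$ of the unstable arc with $\partial\Q_1(f^{-nT}x)$. The argument is then run in contrapositive form: if $T\mu_T(C^0_T)\ge\eps_1$, Birkhoff's theorem decomposes typical orbit segments $[0,n-1]$ into blocks, a definite fraction of the total length (at least $\eps_1 n/2$) being of this ``determined'' type, each contributing at most $2\,\#\Q_1$ choices regardless of its length; counting cylinders conditionally on the unstable leaf via the lemme \ref{lem-h-by-W} then yields $h(f,\mu)\le h_\top(f)-\frac{\eps_1}{4}h_\top(f)<h_\top(f)$. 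The entropy deficit per determined block is what replaces the Frostman-type bound you were seeking, and it requires no decay rate of measures of neighborhoods at all. Your second suggested route (refining the counting of Proposition \ref{prop-Wu-size} to finer scales) is the right instinct, but, as the counterexample above shows, it cannot produce a polynomial rate; carried out correctly it turns into precisely this conditional coding argument.
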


\begin{proof}
Supposons $\pi W^u_{\Q_T}(x,0)\subsetneq W^u_\Q(x)$. Le raccourcissement de $W^u_{\Q_T}(x,0)$ ne peut venir que d'un temps
$k<0$ tel que $f_T^k(x,0)\in M\times\{0\}$. Plus pr\'ecis\'ement, il doit exister un entier $n\geq1$ tel que
$\partial\Q_1(f^{-nT}x)$ coupe $f^{-nT}W^u_{\Q_0}(x)$ en un point $z$.
On a donc $\Q_0^{nT}(z)=\Q_0^{nT}(x)$ pour un $z\in W^u_{\Q_0}(f^{-nT}x)\cap \partial\Q_1(f^{-nT}x)$. 


On voit que pour tout $(x,0)\in C^0_T$, il existe un entier $n=n(x)\geq1$ tel que, sachant $W^u_{\Q_0}(f^{-nT}x)$  et $\Q_1(f^{-nT}x)$, $\Q_0^{nT}(f^{-nT}x)$ est d\'etermin\'e par le choix de $z$ parmi les deux points d'intersection de $W^u_{\Q_0}(f^{-nT}x)$ avec $\partial\Q_1(f^{-nT}x)$. On d\'eduit une borne sur l'entropie.

Soit $\alpha<\eps_1 h_\top(f)/16$. En fixant $T_1$ assez grand on peut garantir:
 $$
    \forall n\geq T_1\;\#\Q_0^n \leq e^{(h_\top(f)+\alpha)n}.
 $$
On peut supposer $T_1\geq \log 2\#\Q_1\log\#\Q_0/\alpha\geq \log 2/\alpha$.
Fixons $T\geq T_1$ et une mesure $\mu_T$. On suppose par contradiction que:
 $$
   m:=T\cdot\mu_T\left(C_T^0\right)\geq \eps_1.
 $$ 
Fixons $N=N(\mu)<\infty$ tel que
 $$
    \mu_T(\{x\in C^0_T:n(x)>N\})<\frac{\beta}{T}.
 $$
Le th\'eor\`eme de Birkhoff fournit un ensemble $E_2\subset M_T$ et un entier $N_2$ tels que 
$\mu_T(E_2)>1-\eps_1/T$ et pour tout $x\in E_2$, tout $n\geq N_2$:
 $$
    \left|\frac1n\#\{0\leq k<n:f_T^{kT}(x)\in C^0_T \} -m\right|< \frac{\eps_1}{8}
    \text{ et }
    \frac1n\#\{0\leq k<n:n(f_T^{kT}(x))>N \} < \beta
 $$
On d\'ecoupe $[0,n-1]$ en des sous-intervalles $[aT,bT-1]$, $a\leq b$ entiers, de trois types:
 \begin{itemize}
  \item $f_T^{bT}x\in B^0_T$, $n(f_T^{bT}x)\leq N$ et $a=b-n(x)$ ($[aT;bT-1]$ est de type 1);
  \item $f_T^{bT}x\notin B^0_T$ et $a=b-1$ ($[aT;bT-1]$ est de type 2);
  \item $f_T^{bT}x\in B^0_T$, $n(f_T^{bT}x)>N$ et $a=b-1$ ($[aT;bT-1]$ est de type 3).
 \end{itemize}
On d\'ecrit $\Q_0^n(x)$ de la fa\c{c}on suivante. On sp\'ecifie la position des intervalles
de type $1$ parmi les $n/T$ intervalles de longueur $T$ (ce qui donne $2^{n/T}$
choix au plus). Plus pr\'ecis\'ement il y en a entre $\eps_1 n/2T$ et $2\eps_1 n/T$
et leur longueur totale est $L_1\geq \eps_1n/2$. 

Chaque intervalle de type $1$ correspond
\`a une suite $\Q_0^{(b-a)T}$ \`a choisir parmi $2\cdot\#\Q_1$ suites seulement,
si on a d\'ej\`a d\'etermin\'e les  $\Q_0(f^kx)$ pour $k\geq aT$, d'apr\`es les remarques pr\'ec\'edentes. On a donc,
conditionnellement, au plus $(2\cdot\#\Q_1)^{2\eps_1 n/T}$ choix de symboles pour l'ensemble de ces intervalles.

Pour les intervalles de type 2 ou 3, $\Q_0^{(b-a)T}(f^ax)\in
\Q_0^{\ell T}$, $\ell\geq1$. On a donc $e^{(n-L_1)(h_\top(f)+\alpha)}$ choix possibles
au total pour ces intervalles.
 
Au final le nombre de $\Q_0^n(x)$ sachant
$W^u_{\Q_0}(x)$ est born\'e par:
 $$
    2^{n/T} \times  (2\cdot\#\Q_1)^{2\eps_1 n/T} \times e^{n(1-\eps_1/2))(h_\top(f)+\alpha)}
     \leq \exp n\left(h_\top(f)-\frac{\eps_1}2 h_\top(f)+3\alpha\right)
 $$
Donc, d'apr\`es le lemme \ref{lem-h-by-W}, on a:
 $$
    h(f,\mu) \leq h_\top(f)-\frac{\eps_1}4 h_\top(f)=:h_1.
 $$
\end{proof}

\section{Structure markovienne}

D'apr\`es la Proposition \ref{prop-ext-Wsu-cross}, on peut supposer que
 $$
       B:=\{ x\in M: W^s_{\Q}(x) \text{ et }  W^u_{\Q}(x) \text{ traversent }\Q(x) \}
 $$
est de mesure non-nulle pour toute mesure de grande entropie.

Soit $\tau_B(x):=\inf \{n\geq1:f^nx\in B\}$, le temps de premier retour dans $B$.
D'apr\`es le th\'eor\`eme de Kac, $h$-presque tout point de $\Sigma(f,\Q)$ passe donc une
infinit\'e de fois dans $B$ aussi bien dans le futur que dans le pass\'e. Soit
$\Sigma'(f,\Q)$ l'ensemble des $\Q$-itin\'eraires de ces bons points.

\begin{definition}
Un \new{mot de premier retour} est une suite finie $A_0\dots A_n$ telle qu'il existe
$x\in B$ satisfaisant: $A_k=\Q(f^kx)$ (pour tout $0\leq k\leq n$)
et $n=\tau_B(x)$. On lui associe la suite finie $(A_0,*)A_1\dots A_{n-1}$ 
appel\'ee \new{mot de base}. 

Le sous-d\'ecalage $\Sigma_B$ est l'ensemble des suites bi-infinies form\'ees par la 
concat\'enation de mots de base sous la condition suivante:
\begin{quote}
  Si $(A_0,*)A_1\dots A_{n-1}(B_0,*)$ appara\^{\i}t alors $A_0\dots A_{n-1}B_0$ est un
  mot de premier retour.
\end{quote}
Pour une suite $\hat A\in\Sigma_B$, les entiers $k\in\ZZ$ dont le $k$\`eme symbole $\hat A_k$ est \'etoil\'e sont appel\'es des \new{temps marqu\'es}.
\end{definition}

Remarquons que $\Sigma_B$ ainsi d\'efini a un alphabet fini mais n'est pas forc\'ement
un sous-shift et que son adh\'erence n'est pas n\'ecessairement de type fini. Toutefois,
on verra que $\Sigma_B$ est conjugu\'e \`a un sous-d\'ecalage markovien  (sur un alphabet
d\'enombrable, infini en g\'en\'eral) et ceci permettra son analyse.

\begin{proposition}
Soit $\pi:\Sigma_B\to\Sigma(f,\Q)$ la projection d\'efinie symbole par symbole par $\pi(A,*)=A$ et $\pi(A)=A$ pour
tout $A\in\Q$. Alors $\pi$ est bien d\'efinie et induit une bijection entre les mesures maximales de
$\Sigma_B$ et celles de $\Sigma(f,\Q)$.
\end{proposition}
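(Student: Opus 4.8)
The plan is to read $\pi$ as the projection of a Kakutani--Rokhlin tower onto the ambient symbolic system: both $\Sigma_B$ and $\Sigma(f,\Q)$ are towers over one and the same induced (first-return) system on $B$, and $\pi$ forgets the position in the tower. Maximal measures will be transferred through this common base.

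First I would verify that $\pi$ takes values in $\Sigma(f,\Q)$ and commutes with $\sigma$. By definition an element $\hat A\in\Sigma_B$ is a bi-infinite concatenation of base words, so every finite window of $\pi(\hat A)$ sits inside a finite concatenation of first-return words; each return word $A_0\cdots A_n$ is realised by an actual point of $B$, and successive return words are glued along their shared endpoint, so a finite concatenation is the $\Q$-itinerary of a genuine orbit segment. Thus every cylinder $[(\pi\hat A)_n\cdots(\pi\hat A)_m]$ is non-empty, and the characterisation $\Sigma(f,\Q)=\{A:\forall n<m,\ [A_n\cdots A_m]\ne\emptyset\}$ gives $\pi(\hat A)\in\Sigma(f,\Q)$. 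Commutation with $\sigma$ is immediate, since deleting the stars commutes with the shift and $\Sigma_B$ is shift-invariant.

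Next I would produce the correspondence at the level of measures. Recall that $\mu(B)>0$ for every measure of large entropy (Proposition~\ref{prop-ext-Wsu-cross}), and that a maximal measure has entropy $h_\top(f)>h_0$, hence positive entropy and in particular is aperiodic. By Kac's theorem such a $\mu$ gives full measure to the set of \emph{good} sequences, those whose orbit meets $B$ infinitely often in both time directions. Over a good sequence $A$ I would define its \emph{genuine marking} $s(A)\in\Sigma_B$ by starring exactly the times at which the orbit of $p(A)$ visits $B$; between two consecutive visits the segment is, by definition, a first-return word, so $s(A)\in\Sigma_B$ and $\pi(s(A))=A$. The map $s$ is shift-equivariant and measurable, so $\hat\mu:=s_*\mu$ is a $\sigma$-invariant lift of $\mu$, and on good sequences $s$ is a two-sided measurable inverse of $\pi$; hence $(\Sigma_B,\hat\mu)$ and $(\Sigma(f,\Q),\mu)$ are measure-theoretically isomorphic and $h(\sigma,\hat\mu)=h(\sigma,\mu)=h_\top(f)$. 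Conversely, a maximal measure $\nu$ on $\Sigma_B$ projects to $\pi_*\nu$, and once one knows that $\nu$ is carried by genuine markings and that $h_\top(\Sigma_B)=h_\top(\Sigma(f,\Q))=h_\top(f)$ (this last equality because $p$ preserves entropy), the same isomorphism gives $\nu=s_*\pi_*\nu$ and $h(\sigma,\pi_*\nu)=h(\sigma,\nu)=h_\top(f)$, so $\pi_*\nu$ is maximal. The two assignments $\mu\mapsto s_*\mu$ and $\nu\mapsto\pi_*\nu$ are then mutually inverse, which is the asserted bijection.

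The hard part is the assertion, used twice above, that maximal measures charge only genuine markings. The difficulty is that a block $A_0\cdots A_n$ is declared a first-return word as soon as \emph{some} $y\in B$ matches it on that finite window, whereas membership of the true point $f^{k}p(\pi\hat A)$ in $B$ is a traversal condition on $W^s_\Q$ and $W^u_\Q$ that depends on the entire itinerary; consequently $\Sigma_B$ a priori contains sequences with ``spurious'' marked times, at which the window imitates a return word although the actual traversal fails, and such sequences give $\pi$ extra, non-genuine preimages. I would control them by showing that the set of sequences carrying a spurious marked time is $h$-negligible. This should follow from the semi-uniform lower bound on the symbolic varieties (Proposition~\ref{prop-Wu-size}) together with the bounded multiplicity $\mult(\Q^n)\le 4$ and expansivity: off an $h$-negligible set these force the traversal of $W^s_\Q,W^u_\Q$ to be decided by a definite finite portion of the itinerary, so that a window long enough to imitate a return word must in fact be one. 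Once spurious markings are excluded, every large-entropy measure lives on genuine markings, which pins down $h_\top(\Sigma_B)=h_\top(f)$ and makes $s$ and $\pi$ mutually inverse on a set of full measure, completing the argument.
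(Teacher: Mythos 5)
There are two genuine gaps, one in each of the two hard steps; in both cases you treat as automatic exactly what the paper has to prove.

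First, well-definedness of $\pi$. You write that since each first-return word is realised by a point of $B$ and successive return words are ``glued along their shared endpoint'', a finite concatenation is the $\Q$-itinerary of a genuine orbit segment. This does not follow: in general, $[A_0\dots A_n]\ne\emptyset$ and $[A_n\dots A_m]\ne\emptyset$ do not imply $[A_0\dots A_m]\ne\emptyset$ --- that implication is precisely the Markov property one must establish, and nothing in your argument uses the defining feature of $B$, namely that $W^s_\Q$ and $W^u_\Q$ \emph{traverse} the squares. The paper's Step 1 proves non-emptiness by induction on the number of concatenated return words, using the ``topological hyperbolicity'' of the rectangles $H_i=\bigcap_{k=0}^{n_i}f^{-k}A^i_k$ (Figure \ref{fig-hyp-top}): because the realising points lie in $B$, each $H_i$ crosses its square fully in the stable direction and $f^{n_i}H_i$ crosses fully in the unstable direction, so the successive images intersect in a nonempty rectangle of the same type. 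Without this crossing argument your first paragraph is an assertion, not a proof.

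Second, injectivity on the relevant sequences. You identify only one failure mode: ``spurious'' marked times at which the window imitates a return word but the traversal fails. Two remarks. (i) That mode is excluded \emph{deterministically} by the Step 1 reasoning --- at any marked time of any lift, the future and past decompose into return words, and the nested-intersection argument shows $W^s(\sigma^kA)$ and $W^u(\sigma^kA)$ traverse --- so no $h$-negligibility argument is needed; your proposed substitute (that traversal is ``decided by a definite finite portion of the itinerary'', via Proposition \ref{prop-Wu-size} and $\mult(\Q^n)\le4$) is unsupported and dubious, since $W^s_\Q(x)=\bigcap_{n\ge1}\overline{f^{-n}\Q(f^nx)}$ depends on the entire one-sided itinerary. (ii) More seriously, you miss the other failure mode, which is the delicate one: a lift with \emph{too few} marks, skipping genuine return times $n_1<\dots<n_{r-1}$ because the long block $A_0\dots A_{n_r}$ also happens to be a first-return word for some \emph{other} point $y\in B$ with $\tau_B(y)=n_r$. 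Such a lift contains no spurious mark in your sense, so even a successful execution of your plan would not rule it out, and the map $s$ would not be a two-sided inverse of $\pi$ on a set of full measure --- the bijection between maximal measures collapses at exactly this point. The paper excludes this mode by a separate geometric argument: since $f^{n_1}y\notin B$, some iterate $f^m\ell^s$ of the local stable curve at $f^{n_1}y$ is cut by an unstable boundary of $A_{n_1+m}$ with $m<n_r-n_1$; since stable and unstable boundaries of the partition elements can neither cross nor bypass each other, this would also shorten $W^s_\Q(\sigma^{n_1}A)$, contradicting the fact that $n_1$ is a marked time of the canonical lift $\hat A$. Your proposal contains no counterpart of this argument.
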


\begin{figure}
\centering
\includegraphics[width=10cm]{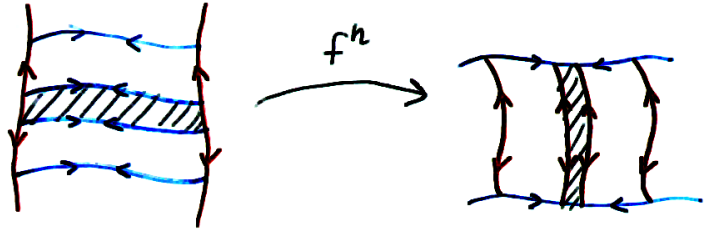}
\caption{Un rectangle topologiquement hyperbolique et son image (hachur\'es), repr\'esent\'es dans leur carr\'es respectifs. Les courbes approximativement horizontales (bleues), resp. verticales (rouges), sont des morceaux de $\mathcal F^s$, resp. $\mathcal F^u$.}\label{fig-hyp-top}
\end{figure}

\step1{$\pi$ est bien d\'efinie.}

Il faut voir que l'image de tout $\omega\in\Sigma_B$ est bien dans $\Sigma(f,\Q)$, c'est-\`a-dire que
$\bigcap_{k=-n}^m f^{-k}A_k\ne\emptyset$ pour des entiers $n,m\to\infty$. Il suffit de montrer que si
$A_0^i\dots A_{n_i}^i$, $i=1,\dots,I$, sont des mots de premiers retours avec $A_{n_i}^i=A_0^{i+1}$ pour
tout $i=1,\dots,I-1$, alors,
 $$
     \bigcap_{i=1}^I f^{-(n_1+\dots+n_{i-1})} H_i \ne \emptyset
     \quad \text{ o\`u } H_i:=\bigcap_{k=0}^{n_i} f^{-k}A_k^i.
 $$
Ceci provient de l'hyperbolicit\'e "topologique" des $f^{n_i}:H_i\to f^{n_i}H_i$ illustr\'ee par la 
figure \ref{fig-hyp-top}. On v\'erifie par r\'ecurrence que l'intersection ci-dessus est encore
hyperbolique et en particulier est non vide.

\step2{$\pi$ est injective}

Soit $A\in\Sigma'(f,\Q)$. Soit $\hat A\in\Sigma_B$ le relev\'e obtenu en marquant exactement les temps $n\in\ZZ$ tels que
$\sigma^n A\in B_\Sigma$. $\hat A$ est clairement un \'el\'ement bien d\'efini de $\Sigma_B$.

Il suffit de montrer que si $\hat A'$ est un rel\`evement quelconque de $A$, $\hat A'=\hat A$.

Pour tout temps marqu\'e $k$ de $\hat A'$, le raisonnement de l'\'etape 1 implique que $W^s(\sigma^kA)$ et $W^u(\sigma^kA)$ traversent. Donc un tel $k$ est marqu\'e pour $\hat A$. Montrons la r\'eciproque
par l'absurde: on suppose que $0,n_1,n_2,\dots,n_r$, $r\geq2$, sont des temps marqu\'es cons\'ecutifs dans $\hat A$ et que, parmi ceux-ci, seuls $0$ et $n_r$ sont marqu\'es dans $\hat A'$.

$A_0\dots A_{n_r}$ est donc $\Q^{n_r+1}(y)$ pour un $y\in B$ avec $\tau_B(y)=n_r$. Il s'en suit que 
$W^s_\Q(f^{n_1}y)$ ne traverse pas. Autrement dit, l'intersection $\ell^s$ 
de la courbe stable locale avec $\Q(f^{n_1}y)$ n'est pas contenue dans $W^s_\Q(f^{n_1}y)$. 
Donc $f^m\ell^s$ n'est pas inclus
dans un \'el\'ement de $\Q$ pour un certain $m>0$ qu'on peut choisir minimal. 
N\'ecessairement $m<n_r-n_1$: sinon $W^s_\Q(f^{n_r}y)$ qui traverse contiendrait $f^{n_r-n_1}(\ell^s)$, impliquant que $\ell^s\subset W^s_\Q(f^{n_1}y)$, une contradiction.

Le bord instable de $\Q(f^my)=A_m$ raccourcit donc $W^s_\Q(f^{n_1}y)$ par rapport \`a $\ell^s$. Un des bords instables de $[A_{n_1}\dots A_{n_1+m-1}]$ est donc envoy\'e \`a l'ext\'erieur du bord instable de $A_{n_1+m}$. Mais ces bords ne se coupent ni ne se contournent, donc  $W^s_\Q(\sigma^{n_1}A)$ serait aussi raccourci, 
en contradiction avec la d\'efinition de $n_1$.

\section{Conclusion}

D'apr\`es ce qui pr\'ec\`ede, il suffit de prouver la multiplicit\'e finie ou l'unicit\'e des mesures maximales
pour le d\'ecalage $\Sigma_B$. Soit $\hat\Sigma$ le d\'ecalage $\sigma$ sur l'ensemble des chemins bi-infinis sur le graphe $\mathcal D$ dont les sommets sont $(w,k)$ o\`u $w$ est un mot de premier retour et $1\leq k<|w|$, $|w|$ d\'esignant la longueur
de $w$ et les fl\`eches sont $(w,k)\to(w,k+1)$ et $(w,|w|-1)\to(w',1)$ si le dernier symbole de $w$ est
le premier symbole de $w'$. Un th\'ero\`eme de Gurevic dit qu'il suffit de prouver que $\mathcal G$ a un nombre fini, resp. \'egal \`a $1$, de composantes irr\'eductibles.

Observons que, par construction, toute composante irr\'eductible de $\mathcal D$ contient un \'etat \'etoil\'e
et ceux-ci sont en nombre fini, prouvant l'assertion principale. 

Il faut maintenant montrer l'unicit\'e dans le cas o\`u $f$ est topologiquement
transitive. Remarquons que la transitivit\'e de $f$ est \'equivalente au fait que l'orbite 
de tout ouvert non-vide de toute feuille instable (ou stable) est dense. En effet, un tel
ouvert contient une courbe $W^u_\eps(x)$ pour $\eps>0$ assez petit or il existe $z\in B(x,\eps/2)$ dont l'orbite
est dense. Comme $W^s_\eps(z)$ rencontre $W^u_\eps(x)$, ceci prouve la remarque, comme le diam\`etre de $W^s_\eps(x)$
tend vers z\'ero d'apr\`es (D, page 1).

Les pr\'eorbites des points p\'eriodiques forment un ensemble d\'enombrable: $N_1:=\bigcup_{n\geq0} f^{-n}(\operatorname{Per}(f))$. On 
peut donc choisir la partition de sort que (*) ses bords stables ne rencontrent pas $N_2:=\bigcup_{x\in N_1} W^s_\eps(x)$.
 
Pour montrer l'irr\'eductibilit\'e de $\mathcal D$, il suffit de v\'erifier que pour deux carr\'es quelconques $R$ et $R'$ 
de la partition,  tels qu'il existe une boucle $\Gamma$ bas\'ee en $R'$, on peut trouver une concat\'enation admissible
de mots de base, le premier commen\c{c}ant par $R$ et le dernier finissant par $R'$.

$\Gamma$ fournit un point $T$-p\'eriodique $p$ tel que, pour tout $\eps>0$, $f^T (W^u_\eps(p)\cap \Q^n(p))\supset 
W^u_\eps(p)$.  (*) assure que $p$ est \`a l'int\'erieur de $R'$. Soit $W^u_r(x_0)\subset R$ dont les extr\'emit\'es
\'evitent $N_2$ (**). La transitivit\'e 
donne $n\geq0$ tel que $f^nW^u_r(x)$ contient $W^u_\eps(y)\cap R'$ pour un $y\in R'$. On peut supposer $y\in W^s_\eps(p)$.
$f^n(W^u_r(x)\cap\Q^n(x))=I$, un intervalle de $W^u(y)$ contenant $y$ en son int\'erieur d'apr\`es (*) et (**). La dilatation
le long de $\mathcal F^u$ (voir D, page 1) garantit maintenant que $f^{n+kT}(W^u_r(x)\cap \Q^n(x)\cap f^{-n}\Q^{kT})$ traverse $R'$ pour $k$ assez grand. Mais ceci dit exactement
que le $Q,n+kT$-itin\'eraire de $x$ est une concat\'enation de mots de premiers retour, comme souhait\'e. $\mathcal D$ est bien
irr\'eductible et la preuve du th\'eor\`eme est achev\'ee.

\end{document}